\documentclass{amsart}
\usepackage{amscd,amsmath,amssymb,amsthm,bm,cases,color,comment,mathrsfs}
\usepackage[dvips]{graphicx}
\usepackage[all]{xy}

\theoremstyle{definition}
\newtheorem{definition}{Definition}[section]

\theoremstyle{plain}
\newtheorem{theorem}[definition]{Theorem}
\newtheorem{proposition}[definition]{Proposition}
\newtheorem{lemma}[definition]{Lemma}
\newtheorem{corollary}[definition]{Corollary}

\numberwithin{equation}{section}

\sloppy

\title[Abelian subgroups of the mapping class groups]{Abelian subgroups of the mapping class groups for non-orientable surfaces}

\author[E.~Kuno]{Erika Kuno}
\address{
(Erika Kuno)
Department of Mathematics,
Tokyo Institute of Technology,
2-12-1 Oh-okayama, Meguro-ku, Tokyo 152-8551, Japan
}
\email{kuno.e.aa@m.titech.ac.jp}
\date{\today}
\keywords{Mapping class group; non-orientable surface; Nielsen-Thurston classification}
\subjclass[2010]{20F38, 20K27}

\begin{document}

\begin{abstract}
Birman-Lubotzky-McCarthy proved that any abelian subgroup of the mapping class groups for orientable surfaces is finitely generated.
We apply Birman-Lubotzky-McCarthy's arguments to the mapping class groups for non-orientable surfaces.
We especially find a finitely generated group isomorphic to a given torsion-free subgroup of the mapping class groups.
\end{abstract}

\maketitle

%%%%%%%%%%%%%%%%%
%%%%%Introduction%%%%%%
%%%%%%%%%%%%%%%%%
\section{Introduction}\label{Introduction}

Let $S$ be a compact orientable surface of genus $g$ with $b$ boundary components and $c$ connected components.
Assume each connected component of $S$ has a negative Euler characteristic.
Let $S_{g, n}$ be a compact connected orientable surface of genus $g$ with $n$ boundary components.
We also write $S$ as $S_{g, n}$.
We denote by $\mathscr{M}(S)$ the mapping class group of $S$, that is, the group of isotopy classes of orientation preserving self-homeomorphisms of $S$ with isotopies fixing each boundary component of $S$ setwise.
Birman-Lubotzky-McCarthy~\cite{Birman-Lubotzky-McCarthy83} proved that any abelian subgroup of $\mathscr{M}(S)$ is finitely generated with torsion-free rank bounded by $3g+b-3c$.
Kim-Koberda~\cite{Kim-Koberda16} quoted it in their paper and renew it as follows: if $S$ is a compact connected orientable surface and $G$ is a torsion-free abelian subgroup of $\mathscr{M}(S)$, then $G$ is isomorphic to a finitely generated subgroup of $\mathscr{M}(S)$ which consists of Dehn twists and pseudo-Anosov elements on some connected subsurface on $S$ whose supports are pairwise disjoint.
Let $N=N_{g, n}$ be a compact connected non-orientable surfaces of genus $g\geq 1$ with $n\geq 0$ boundary components whose Euler characteristic is negative, that is, $g+n\geq 3$.
Similarly, we also denote by $\mathscr{M}(N)$ the mapping class group of $N$.
Let $p\colon S_{g-1, 2n}\rightarrow N_{g, n}$ be the double covering map of $N_{g, n}$.
Because $\mathscr{M}(N_{g, n})$ is a subgroup of $\mathscr{M}(S_{g-1, 2n})$, it follows that any abelian subgroup of $\mathscr{M}(N_{g, n})$ is finitely generated with torsion-free rank bounded by $3(g-1)+2b$ by the result of Birman-Lubotzky-McCarthy.
In this paper, we write $\iota\colon \mathscr{M}(N_{g, n})\rightarrow \mathscr{M}(S_{g-1, 2n})$ as the injective homomorphism.
By Szepietowski~\cite{Szepietowski10} the image $\iota(\mathscr{M}(N_{g, n}))$ includes no Dehn twists in $\mathscr{M}(S_{g-1, 2n})$.
Therefore, our motivation in this paper is to detect finitely generated groups isomorphic to torsion-free abelian subgroups of $\mathscr{M}(N_{g, n})$.
Applying Birman-Lubotzky-McCarthy's arguments to the mapping class groups of non-orientable surfaces, we obtain the following theorem:

\begin{theorem}\label{first_thm}
Let $N$ be a non-orientable surface whose Euler characteristic is negative and $G$ a torsion-free abelian subgroup of $\mathscr{M}(N)$.
%Then, $G$ is isomorphic to a subgroup generated by Dehn twists along pairwise disjoint curves on $N$.
Then, $G$ is isomorphic to a subgroup $\langle\tau_{1},\cdots,\tau_{k}\rangle<\mathscr{M}(N)$, where each $\tau_{i}$ is an isotopy class of a Dehn twist and the supports of $\tau_{i}$ and $\tau_{j}$ are disjoint for $i\not=j$.
%Then, $G$ is isomorphic to a subgroup $\langle\tau_{1},\cdots,\tau_{k}, p_{1},\cdots,p_{l}\rangle<\mathscr{M}(N)$, where each $\tau_{i}$ is an isotopy class of a Dehn twist and $p_{i}$ is a pseudo-Anosov mapping class on a connected subsurface of $N$, and ${\rm supp}\phi_{i}\cap{\rm supp}\phi_{j}=\emptyset$ ($\phi_{i}, \phi_{j}\in\{\tau_{1},\cdots,\tau_{k}, p_{1},\cdots,p_{l}\}$, $\phi_{i}\not=\phi_{j}$).
Further, $k\leq \frac{3}{2}(g-1)+n-2$ if $g$ is odd and $k\leq \frac{3}{2}g+n-3$ if $g$ is even (see Figure~\ref{fig_two_sided_scc}).
%Further, $k+l\leq 2g+n-3$, and $k\leq \frac{3}{2}(g-1)+n-2$ if $g$ is odd and $k\leq \frac{3}{2}g+n-3$ if $g$ is even. 
\end{theorem}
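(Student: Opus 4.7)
The strategy is to adapt the Birman-Lubotzky-McCarthy (BLM) argument directly to $\mathscr{M}(N)$, importing Nielsen-Thurston machinery from the orientable setting through the embedding $\iota$ and the deck involution $\sigma$ of the double cover $p$. Finite generation and the existence of a rank $k$ for $G$ follow immediately from BLM applied to $\iota(G)\subset\mathscr{M}(S_{g-1,2n})$; the substantive work is to realize $G$ inside $\mathscr{M}(N)$ as a subgroup generated by Dehn twists on pairwise disjoint two-sided curves, and to prove the sharper bound on $k$ coming from the topology of $N$ itself.

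For the realization, I plan to set up a canonical reduction system for any $\phi\in\mathscr{M}(N)$ by taking the $\sigma$-invariant canonical reduction system of $\iota(\phi)\subset\mathscr{M}(S_{g-1,2n})$ and descending it to $N$ via $p$. A key point is that the descended system can be chosen to consist entirely of two-sided essential scc on $N$: a $\sigma$-fixed curve in the upstairs system would project to a one-sided scc downstairs, but the Dehn twist along such a lift does not lie in $\iota(\mathscr{M}(N))$ by Szepietowski's theorem, so it cannot genuinely contribute to $\iota(\phi)$ and can be removed from the system. Commutativity then yields a common reduction system $\mathcal{R}$ for $G$ of disjoint two-sided essential scc on $N$. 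After passing to a finite-index (still rank $k$) subgroup of $G$, each element acts on every component $R_{j}$ of $N\smallsetminus\mathcal{R}$ either trivially or as a pseudo-Anosov (in the sense of lifting to pA on the orientable cover of $R_{j}$), and acts on each curve of $\mathcal{R}$ by a power of a Dehn twist. This produces an injection $G\hookrightarrow\mathbb{Z}^{|\mathcal{R}|+q}$ where $q$ is the number of pA components. For each such $R_{j}$, I would choose an essential two-sided scc $c_{j}\subset R_{j}$ (which exists because a pA-supporting subsurface is strictly more complicated than a pair of pants or a M\"obius band) and replace the cyclic pA action by powers of $T_{c_{j}}\in\mathscr{M}(N)$. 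Since $c_{j}$ is disjoint from $\mathcal{R}$ and from the other chosen curves, the resulting Dehn twists generate a free abelian subgroup of $\mathscr{M}(N)$ abstractly isomorphic to $G$.

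The upper bound on $k$ then reduces to estimating the maximal number of pairwise disjoint, pairwise non-isotopic, essential two-sided scc on $N_{g,n}$. Decomposing $N_{g,n}$ into pairs of pants together with M\"obius bands (when $g$ is odd) or Klein-bottle-with-hole pieces (when $g$ is even) and computing via Euler characteristics yields the counts $\tfrac{3(g-1)}{2}+n-2$ and $\tfrac{3g}{2}+n-3$ respectively, matching the theorem's bound (the optimal configurations are drawn in Figure~\ref{fig_two_sided_scc}). The main technical hurdle is the careful setup of the canonical reduction system on $N$ so that it lives on two-sided curves and is compatible across all commuting elements, combined with the pA-to-Dehn-twist replacement preserving injectivity; this is where Szepietowski's theorem and the torsion-free hypothesis (forcing each pA piece to contribute only one cyclic $\mathbb{Z}$-factor) do the essential work.
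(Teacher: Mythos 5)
Your overall architecture matches the paper's: cut $G$ along a canonical reduction system, identify the kernel of the reduction with a group of Dehn twists along the two-sided reduction curves, observe that the quotient acts by (at most one cyclic factor of) pseudo-Anosov classes on each complementary component, replace each pseudo-Anosov factor by a Dehn twist supported in that component, and bound $k$ by the maximal number of pairwise disjoint, non-isotopic two-sided curves on $N_{g,n}$. The one structural difference is that you import the canonical reduction system from the orientation double cover via $\iota$ and the deck involution $\sigma$, whereas the paper constructs the essential reduction system intrinsically on $N$ (Section~\ref{Essential reduction classes}, in particular Lemmas~\ref{existance_of_curves_for_fixed_arc_by_homeo} and \ref{the_minimal_adequate_reduction_system}, whose statements genuinely differ from the orientable case). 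Your route is viable but not free: you would still need to verify that the $\sigma$-invariant canonical system upstairs descends to the \emph{minimal} adequate reduction system downstairs and that this is compatible across all of $G$; that verification is essentially Wu's theorem, i.e.\ the content of Theorem~\ref{second_thm}, so it cannot simply be waved through.

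The genuine gap is your treatment of one-sided curves. You claim the descended reduction system ``can be chosen to consist entirely of two-sided essential scc,'' arguing that a $\sigma$-invariant curve upstairs projects to a one-sided curve whose twist is not in $\iota(\mathscr{M}(N))$ by Szepietowski's theorem and hence ``can be removed from the system.'' This conflates two different roles of a reduction curve. Whether a Dehn twist along (a lift of) $\alpha$ lies in $\iota(\mathscr{M}(N))$ is irrelevant to whether $\alpha$ is needed to cut $N$ so that the restrictions become adequately reduced: a one-sided curve $\mu$ can be an essential reduction class (e.g.\ take $\phi$ pseudo-Anosov on the complement of a M\"obius-band neighborhood of $\mu$ and trivial on that neighborhood; the boundary of the neighborhood bounds a M\"obius band and so is not even in $\mathscr{S}(N)$, so $\mu$ itself must stay in the system). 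If you delete such curves, the restrictions of your group to the components of $N\smallsetminus\mathcal{R}$ are no longer trivial-or-pseudo-Anosov, and your injection $G\hookrightarrow\mathbb{Z}^{|\mathcal{R}|+q}$ breaks down. The correct statement, which is what the paper proves as Lemma~\ref{kernel_of_reduction_homomorphism}(1) using Stukow's computation of the kernel of the inclusion homomorphism (not Szepietowski's result), is that one-sided curves must be \emph{retained} in the reduction system but contribute nothing to $\mathrm{Ker}(\Lambda)$, since there is no Dehn twist supported in a M\"obius band; only $\mathscr{A}^{\mathrm{two}}$ produces $\mathbb{Z}$-factors. With that correction your count $k\leq\mathrm{card}(\mathscr{A}^{\mathrm{two}})+C_{0}(N_{\mathscr{A}})$ and the final Euler-characteristic bound go through as in the paper.
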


\begin{figure}[h]
\includegraphics[scale=0.50]{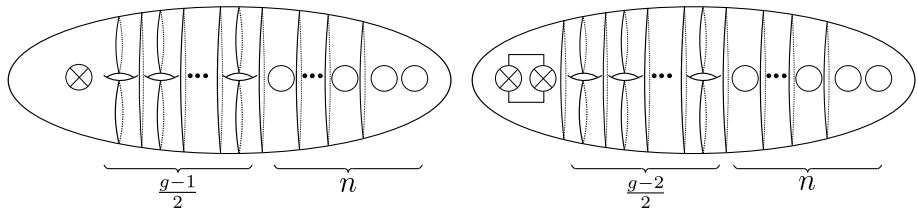}
\caption{The maximal number of two-sided curves on odd genus surfaces (left-hand side) and even genus surfaces (right-hand side).}\label{fig_two_sided_scc}
\end{figure}

Atalan-Szepietowski~\cite[Remark 2.4]{Atalan-Szepietowski14} proved that for odd genus non-orientable surfaces of genus $g\geq 5$ with $n$ punctures, the maximal rank of abelian subgroups of the mapping class groups is $\frac{3}{2}(g-1)+n-2$.
Thus, we give another proof of this result for odd genus non-orientable surfaces whose Euler characteristic are negative.
However, for even genus non-orientable surfaces, we don't know the maximal rank of them (Atalan~\cite[Proposition 3.1]{Atalan15} gave a partial answer for it).
We give the answer for this question.

Thurston~\cite{Thurston88} proved that every mapping class $\tau\in\mathscr{M}(N)$ is either reducible or of finite order or pseudo-Anosov, and if $\tau$ is reducible, then it has a family $\mathscr{A}$ of isotopy classes of essential simple closed curves such that $\tau(\mathscr{A})=\mathscr{A}$ and each of the restrictions of $\tau$ is of finite order or pseudo-Anosov on each connected component of $N-A$, where $A$ is a set of representatives of $\mathscr{A}$.
We call this theorem Thurston's theorem.
In the theorem, the system $A$ is not unique in general.
In Section~\ref{Essential reduction classes}, we will introduce an ``essential reduction system" on $N$ in a similar way to Birman-Lubotzky-McCarthy~\cite{Birman-Lubotzky-McCarthy83}.
Birman-Lubotzky-McCarthy proved that the essential reduction system satisfies the condition in Thurston's theorem and it is a minimal reduction system among such systems and unique up to isotopy for only orientable surfaces.
We show the same result for non-orientable surfaces:

\begin{theorem}\label{second_thm}
A system $A$ satisfying the conditions of Thurston's theorem, which is minimal among such systems, is unique up to isotopy.
\end{theorem}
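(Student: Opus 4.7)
The plan is to follow the Birman-Lubotzky-McCarthy strategy and realize the minimal reduction system as a canonical invariant attached to $\tau$. For $\tau \in \mathscr{M}(N)$ I will use the essential reduction system $\mathscr{A}(\tau)$ introduced in the next section: the set of isotopy classes $[\alpha]$ of essential simple closed curves on $N$ such that (a) $\tau^n([\alpha]) = [\alpha]$ for some $n \geq 1$, and (b) $i([\alpha], [\beta]) = 0$ for every $[\beta]$ satisfying (a) for some positive power of $\tau$. The theorem will then reduce to two claims: that $\mathscr{A}(\tau)$ is itself a Thurston reduction system for $\tau$, and that it is contained in every such system. Minimality and uniqueness follow immediately.

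First I would verify that $\mathscr{A}(\tau)$ is a reduction system. Condition (b) applied within $\mathscr{A}(\tau)$ gives pairwise disjointness, and since any family of pairwise disjoint isotopy classes of essential simple closed curves on $N$ is finite, $\mathscr{A}(\tau)$ has only finitely many elements. After replacing $\tau$ by a power that fixes each element of $\mathscr{A}(\tau)$, I would realize $\mathscr{A}(\tau)$ by a disjoint system $A$ of curves and apply Thurston's classification to the restriction of this power to each component of $N - A$. If some restriction were further reducible by an essential isotopy class $[\gamma]$, then $[\gamma]$ would be periodic under $\tau$ and disjoint from every element of $\mathscr{A}(\tau)$, placing $[\gamma]$ itself in $\mathscr{A}(\tau)$ and contradicting the maximality of $A$.

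The heart of the argument will be to show $\mathscr{A}(\tau) \subseteq [A]$ for any Thurston reduction system $A$. I would take $[\alpha] \in \mathscr{A}(\tau)$ and first suppose $\alpha$ can be isotoped off $A$, so that $\alpha$ lies in a component $R$ of $N - A$ on which a suitable power $\tau^m$ is finite order or pseudo-Anosov. In the pseudo-Anosov case, condition (a) for $[\alpha]$ is incompatible with the fact that pseudo-Anosov maps have no periodic isotopy classes of essential simple closed curves. In the finite order case, every essential class $[\beta]$ in $R$ satisfies (a); choosing such a $[\beta]$ with $i([\alpha],[\beta]) > 0$ is possible unless $\alpha$ is peripheral in $R$, so $\alpha$ must be peripheral and hence $[\alpha] \in [A]$ (or $\alpha$ is peripheral in $N$, contradicting essentiality). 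The remaining case $i(\alpha, A) > 0$ I would handle by passing to a power of $\tau$ fixing every component of $A$ and analyzing the preserved intersection pattern of $\alpha$ with $A$ to produce a contradicting $[\beta]$.

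The main obstacle will be executing the Thurston-classification and pseudo-Anosov steps on components $R$ of $N - A$ that are themselves non-orientable. Since $\mathscr{M}(N)$ contains no Dehn twists along one-sided curves, one must be careful about the precise meaning of ``pseudo-Anosov on non-orientable $R$'' and verify the required structural facts, typically via a $\mathbb{Z}/2$-equivariant analysis on the orientation double cover of $R$. A secondary technical point will be to treat one- and two-sided curves uniformly in conditions (a) and (b), so that both kinds of curve can enter $\mathscr{A}(\tau)$ when necessary, and to ensure that pairwise disjoint realizations and their complements behave well in this mixed setting.
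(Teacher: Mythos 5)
Your overall strategy is the same as the paper's (and Birman--Lubotzky--McCarthy's): define the essential reduction system $\mathscr{A}_{\tau}$, show it is an adequate reduction system contained in every adequate reduction system, and deduce that the minimal system is canonical. Your third paragraph (the containment $\mathscr{A}_{\tau}\subseteq\mathscr{A}$ for every reduction system $\mathscr{A}$) is essentially sound --- indeed the case $i(\alpha,A)>0$ is immediate, since the classes of $A$ are themselves periodic and your condition (b) forbids $\alpha$ from crossing any periodic class. The problem is in the second paragraph, which is exactly where the real work of the theorem lives.

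The step ``if some restriction were further reducible by $[\gamma]$, then $[\gamma]$ would be periodic under $\tau$ and disjoint from every element of $\mathscr{A}(\tau)$, placing $[\gamma]$ itself in $\mathscr{A}(\tau)$'' is invalid. Membership in $\mathscr{A}(\tau)$ requires $[\gamma]$ to be disjoint from \emph{every} periodic class (your condition (b)), not merely from the elements of $\mathscr{A}(\tau)$; a periodic class disjoint from $\mathscr{A}(\tau)$ may well be crossed by another periodic class and hence fail to be essential. So a priori it is possible that, after cutting along $\mathscr{A}_{\tau}$, some restriction is still reducible and none of its reducing classes is essential for $\tau$ --- and ruling this out is precisely the content of Lemma~\ref{the_minimal_adequate_reduction_system} (a non-essential class can be deleted from an adequate reduction system without losing adequacy for a power of $\tau$), which is the engine behind the statement that $\mathscr{A}_{\tau}$ is adequate and equals the intersection of all adequate reduction systems. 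Proving that lemma is where the non-orientable difficulties you flag in your last paragraph actually bite: one needs the arc lemma (Lemma~\ref{existance_of_curves_for_fixed_arc_by_homeo}), whose statement differs from the orientable case because of the exceptional components $S_{0,3}$, $N_{1,2}$, $N_{2,1}$ and the exceptional curves $\beta_{0},\beta_{1},\beta_{2}$, together with the description of $\mathrm{Ker}(\Lambda)$ in Lemma~\ref{kernel_of_reduction_homomorphism} (only two-sided curves contribute Dehn twists) to show the residual twist exponent $k$ vanishes. Your proposal does not supply any substitute for this analysis, so as written it has a genuine gap at the central step.
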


Theorem~\ref{second_thm} was first proven by Wu~\cite{Wu87}.
We give another proof by applying the arguments of Birman-Lubotzky-McCarthy.

Combining \cite[Theorem A]{Birman-Lubotzky-McCarthy83} and Theorem~\ref{first_thm}, we obtain the following result.

\begin{corollary}
Let $G$ be an abelian subgroup of $\mathscr{M}(N)$.
Then $G$ is finitely generated with torsion-free rank bounded by $\frac{3}{2}(g-1)+n-2$ if $g$ is odd and $\frac{3}{2}g+n-3$ if $g$ is even.
\end{corollary}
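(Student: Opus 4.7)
The plan is to derive this corollary directly by combining Theorem~A of Birman-Lubotzky-McCarthy with Theorem~\ref{first_thm}, using the injection $\iota\colon \mathscr{M}(N_{g,n})\rightarrow \mathscr{M}(S_{g-1,2n})$ recalled in the introduction. The two ingredients split naturally: Birman-Lubotzky-McCarthy supplies finite generation, and Theorem~\ref{first_thm} supplies the rank bound.

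First, I would establish finite generation. Let $G$ be an abelian subgroup of $\mathscr{M}(N)$. Since $\iota$ is injective, the image $\iota(G)$ is an abelian subgroup of the orientable mapping class group $\mathscr{M}(S_{g-1,2n})$, and $G\cong \iota(G)$. By Theorem~A of Birman-Lubotzky-McCarthy, every abelian subgroup of $\mathscr{M}(S_{g-1,2n})$ is finitely generated, so $\iota(G)$, and hence $G$, is finitely generated.

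Next, I would bound the torsion-free rank. By the structure theorem for finitely generated abelian groups, we may write $G \cong T \oplus \mathbb{Z}^{k}$, where $T$ is the (finite) torsion subgroup of $G$ and $k$ is the torsion-free rank of $G$. Identifying the $\mathbb{Z}^{k}$ summand with an actual subgroup of $G$ gives a torsion-free abelian subgroup of $\mathscr{M}(N)$ of rank $k$. Applying Theorem~\ref{first_thm} to this $\mathbb{Z}^{k}$ subgroup produces the desired inequality: $k\leq \frac{3}{2}(g-1)+n-2$ when $g$ is odd and $k\leq \frac{3}{2}g+n-3$ when $g$ is even.

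There is no substantive obstacle, since both cited inputs do all of the real work. The only small point to verify is that the bound in Theorem~\ref{first_thm}, which is stated for torsion-free abelian subgroups, can be transferred to the torsion-free rank of a general abelian subgroup; this is exactly what the splitting $G\cong T\oplus \mathbb{Z}^{k}$ accomplishes.
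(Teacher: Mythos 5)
Your proposal is correct and matches the paper's intended argument: the paper gives no separate proof, stating only that the corollary follows by combining Theorem~A of Birman--Lubotzky--McCarthy (finite generation, via the embedding $\iota$ into $\mathscr{M}(S_{g-1,2n})$ recalled in the introduction) with the rank bound of Theorem~\ref{first_thm}. Your splitting $G\cong T\oplus\mathbb{Z}^{k}$ is exactly the routine bookkeeping needed to transfer the torsion-free bound to a general abelian subgroup.
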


There are several differences from the case of orientable surfaces in the proofs of the lemmas to prove Theorems~\ref{first_thm} and \ref{second_thm}.
First difference appears in the proof of (1) in Lemma~\ref{kernel_of_reduction_homomorphism}.
We use the result of Stukow~\cite{Stukow09} about the kernel of the inclusion homomorphism from the mapping class group of a subsurface to that of the ambient surface for the proof.
Secondly, the statement of Lemma~\ref{existance_of_curves_for_fixed_arc_by_homeo} is different from the orientable surface case (\cite[Lemma 2.4]{Birman-Lubotzky-McCarthy83}).
Thirdly, because we use Lemma~\ref{existance_of_curves_for_fixed_arc_by_homeo} to prove Lemma~\ref{the_minimal_adequate_reduction_system}, we have some differences from the orientable surface case in the proof of Lemma~\ref{the_minimal_adequate_reduction_system}.
However, we obtain a similar result to the corresponding lemma (\cite[Lemma 2.5]{Birman-Lubotzky-McCarthy83}).

The paper is organized as follows.
In Section~\ref{Essential reduction classes} we will introduce essential reduction systems in accordance with orientable surface case by Birman-Lubotzky-McCarthy, and prove Theorem~\ref{second_thm}.
In Section~\ref{Abelian subgroups of mapping class groups} we will accomplish the proof of Theorem~\ref{first_thm}.

%%%%%%%%%%%%%%%%%
%%Essential reduction classes%%
%%%%%%%%%%%%%%%%%
\section{Essential reduction classes}\label{Essential reduction classes}

In this section, following Birman-Lubotzky-McCarthy we introduce essential reduction systems for non-orientable surfaces.
A compact connected {\it non-orientable surface} of genus $g\geq 1$ with $n\geq 0$ boundary components is the connected sum of $g$ projective planes with $n$ open disks removed.
We denote it by $N=N_{g,n}$.
Note that $N$ is homeomorphic to the surface obtained from a sphere by removing $g+n$ open disks and attaching $g$ M\"{o}bius bands along their boundaries, and we call each of the M\"{o}bius bands the crosscap.
We identify antipodal points of each periphery of a crosscap.
A simple closed curve on $N$ is essential if it does not bound a disk or a M\"{o}bius band, and is not parallel to a boundary component of $N$.
We often refer to essential simple closed curves as curves.
The collection of non-oriented isotopy classes of curves in $N$ is denoted by the symbol $\mathscr{S}(N)$.
The mapping class group $\mathscr{M}(N)$ of $N$ is the group consists of the isotopy classes of self-homeomorphisms on $N$.
We remark that admissible isotopies fix each boundary component setwise.
If $\tau\in\mathscr{M}(N)$ and $\alpha\in\mathscr{S}(N)$, then $\tau(\alpha)$ denotes the class of $t(a)$, where $t\in\tau$ and $a\in\alpha$.
A subset $\mathscr{A}\subset\mathscr{S}(N)$ is {\it admissible} if a set $A$ of the representatives of $\mathscr{A}$ can be chosen so that it consists of pairwise disjoint curves.
Similarly we say that $A$ is an {\it admissible set of representatives}.
Let $\mathscr{A}$ be an admissible subset of $\mathscr{S}(N)$.
From now we use some notations which are the same as Birman-Lubotzky-McCarthy~\cite{Birman-Lubotzky-McCarthy83}. 
The symbol $\mathscr{M}_{\mathscr{A}}(N)$ denotes the stabilizer of $\mathscr{A}$ in $\mathscr{M}(N)$ which preserves the set $\mathscr{A}$.
We denote by $N_{\mathscr{A}}$ the natural compactification of $N-A$, where $A$ is any admissible set of representatives of $\mathscr{A}$.
If $\tau\in\mathscr{M}_{\mathscr{A}}(N)$, then we can choose an admissible set $A$ of representatives of $\mathscr{A}$ and a representative $t$ of $\tau$ such that $t(A)=A$.
Furthermore, $t|_{N-A}$ extends uniquely to $N_{\mathscr{A}}$.
Note that this process determines a well defined class $\widehat{\tau}\in\mathscr{M}(N_{\mathscr{A}})$.
We shall refer to this class $\widehat{\tau}$ as the {\it reduction} of $\tau$ along $\mathscr{A}$.
The assignment $\tau\rightarrow\widehat{\tau}$ yields a homomorphism $\Lambda\colon\mathscr{M}_{\mathscr{A}}(N)\rightarrow\mathscr{M}(N_{\mathscr{A}})$, which we shall refer to as the {\it reduction homomorphism}.
Let $a$ be a two-sided simple closed curve on $N$.
We denote by $t_{a}$ the {\it Dehn twist} along $a$, which is a homeomorphism on $N$ defined by cutting $N$ along $a$, twisting one side by $2\pi$, and reglueing.
Let $\tau_{\alpha}$ be the isotopy class of $t_{a}$, where $\alpha$ is an isotopy class of $a$.
Abusing the notation we often call $\tau_{\alpha}$ the Dehn twist along $a$

We remark that $\Lambda$ is not an isomorphism in general, because according to our definition of the mapping class group each Dehn twist $\tau_{\alpha}$ can be in the kernel ${\rm Ker}(\Lambda)$ of $\Lambda$.
A natural representation $\partial\colon\mathscr{M}(N)\rightarrow{\rm Aut}(\partial N)$ arises from the permutation of boundary components.
Let $\mathscr{A}^{\rm two}$ and $\mathscr{A}^{\rm one}$ be the subsets of $\mathscr{A}$ which consist of all isotopy classes of two-sided curves and one-sided curves respectively.
If we write card($\mathscr{A}$), then it means the cardinality of $\mathscr{A}$.

\begin{lemma}\label{kernel_of_reduction_homomorphism}
Let $\mathscr{A}$ be an admissible subset of $\mathscr{S}(N)$.
Then the following occur.
\begin{itemize}
\item[(1)] ${\rm Ker}(\Lambda)=\langle\tau_{\alpha}\mid\tau_{\alpha}\in\mathscr{A}^{\rm two}\rangle$.
\item[(2)] ${\rm Ker}(\Lambda)\subset {\rm center}({\rm Ker}(\partial\circ\Lambda))$.
\end{itemize}
\end{lemma}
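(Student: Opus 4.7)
The plan for part (1) is to prove the two inclusions separately. The inclusion $\langle \tau_\alpha \mid \tau_\alpha \in \mathscr{A}^{\rm two} \rangle \subseteq {\rm Ker}(\Lambda)$ is essentially immediate from the definition of the reduction homomorphism: for any two-sided $\alpha \in \mathscr{A}$, the Dehn twist $t_a$ is supported on an annular neighborhood of a representative $a$, and cutting $N$ along $a$ splits this annulus into two half-annuli on which the restriction of $t_a$ is isotopic, rel the outer boundary, to the identity. Hence $\widehat{\tau}_\alpha$ is trivial in $\mathscr{M}(N_{\mathscr{A}})$.

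For the reverse inclusion, given $\tau \in {\rm Ker}(\Lambda)$, I would choose an admissible set $A$ of representatives and a representative $t$ of $\tau$ with $t(A)=A$, and then modify $t$ within its isotopy class so that it acts as the identity on the subsurface obtained by deleting an open regular neighborhood $V$ of $A$. This reduces the question to understanding the kernel of the inclusion homomorphism $\mathscr{M}(V) \to \mathscr{M}(N)$, where $V$ is a disjoint union of annular neighborhoods of the two-sided curves in $A$ and M\"obius band neighborhoods of the one-sided curves. At this point I would invoke Stukow's description of such inclusion kernels from \cite{Stukow09}. The relevant outcome is that on each annular component the mapping class group rel boundary is $\mathbb{Z}$, generated by the Dehn twist, while on each M\"obius band component every self-homeomorphism fixing the boundary circle becomes isotopically trivial in $N$. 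Consequently $\tau$ is a product of Dehn twists along two-sided curves of $\mathscr{A}$, and the one-sided curves contribute nothing to ${\rm Ker}(\Lambda)$.

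For part (2), by (1) it suffices to show that every generator $\tau_\alpha$ with $\alpha \in \mathscr{A}^{\rm two}$ commutes with every $\tau \in {\rm Ker}(\partial \circ \Lambda)$. Since $\widehat{\tau}$ fixes each boundary component of $N_{\mathscr{A}}$ setwise, and the two circles of $\partial N_{\mathscr{A}}$ arising from cutting along $\alpha$ correspond to the two sides of $\alpha$ in $N$, the element $\tau$ must fix $\alpha$ setwise and preserve each of its sides in $N$. This forces $\tau \, \tau_\alpha \, \tau^{-1} = \tau_{\tau(\alpha)}^{\pm 1} = \tau_\alpha$ with no sign flip, so $\tau_\alpha$ lies in the center of ${\rm Ker}(\partial \circ \Lambda)$.

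The principal technical obstacle lies in the reduction step of (1): one must carefully justify that self-homeomorphisms supported on the M\"obius band components of $V$ produce nothing new in ${\rm Ker}(\Lambda)$, since this is precisely the point at which the argument diverges from the orientable case of Birman-Lubotzky-McCarthy and at which the results of \cite{Stukow09} enter in an essential way.
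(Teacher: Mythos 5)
For part (1) your argument is essentially the same as the paper's, just packaged differently. The paper does not isotope $t$ to be supported in a neighbourhood $V$ of $A$; instead it builds a commutative diagram relating $\Lambda$ to the capping homomorphism $\eta_{1}\colon\mathscr{M}'(N_{\mathscr{A}})\to\mathscr{M}(\widehat{N-A})$, uses the five lemma to get a surjection ${\rm Ker}(\eta_{1})\twoheadrightarrow{\rm Ker}(\Lambda)$, identifies ${\rm Ker}(\eta_{1})$ as the free abelian group on the Dehn twists about the new boundary circles (this is where Stukow's computation enters), and then observes that the two boundary twists coming from a two-sided $\alpha$ both map to $\tau_{\alpha}$ while the single boundary twist coming from a one-sided curve maps to the twist about the boundary of a M\"obius band, which is trivial. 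Your support-in-$V$ reduction plus the facts that the annulus contributes $\mathbb{Z}$ generated by the core twist and that the M\"obius band (Epstein) contributes nothing is the same mathematical content, and both routes ultimately rest on \cite{Stukow09}; I would only ask you to spell out the isotopy-extension step (triviality of $\widehat{\tau}$ with the setwise boundary convention lets you absorb the isotopy into $V$, at the cost of introducing twists about $\partial V$), since that is exactly what the paper's diagram argument is encoding.

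For part (2) the paper gives no argument (it defers to Birman--Lubotzky--McCarthy), but your proposed argument has a genuine gap at the step ``this forces $\tau\,\tau_{\alpha}\,\tau^{-1}=\tau_{\tau(\alpha)}^{\pm1}=\tau_{\alpha}$ with no sign flip.'' The sign of the conjugated twist is governed by whether $\tau$ preserves the orientation of the annular neighbourhood $\eta(a)$, not by whether it preserves the two sides of $a$: a homeomorphism can fix each side of $a$ while reversing the orientation of the core curve (a reflection of the annulus), and this reverses the orientation of $\eta(a)$ and gives $\tau\,\tau_{\alpha}\,\tau^{-1}=\tau_{\alpha}^{-1}$. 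Such a homeomorphism still induces the trivial permutation of the boundary components of $N_{\mathscr{A}}$ as $\partial$ is defined in the paper (it only records the permutation, not the orientations, of boundary circles), so your hypothesis ``$\widehat{\tau}$ fixes each boundary component setwise'' does not exclude it. In the orientable case of Birman--Lubotzky--McCarthy this possibility is killed by the global orientation: an orientation-preserving homeomorphism that preserves both sides of $a$ must preserve the orientation of $\eta(a)$. That tool is unavailable on $N$, so you need an additional input --- for instance, interpreting ${\rm Ker}(\partial\circ\Lambda)$ so that its elements preserve the chosen orientations of the new boundary circles of $N_{\mathscr{A}}$ (as in the paper's definition of $\mathscr{M}'(N_{\mathscr{A}})$), or otherwise ruling out the core-reflecting elements --- before the commutation $\tau\,\tau_{\alpha}\,\tau^{-1}=\tau_{\alpha}$ can be asserted.
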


\begin{proof}[Proof of Lemma~\ref{kernel_of_reduction_homomorphism}]
We will prove only (1), since the proof of (2) is the same as that of \cite[Lemma 2.1 (2)]{Birman-Lubotzky-McCarthy83}.
Let $A$ be a set of representatives of $\mathscr{A}$ which are pairwise disjoint, and $\widehat{N-A}$ the surface which we cap the components of $\partial A$ in $N-A$ by once punctured disks.
We put orientations in the new boundary components of $N_{\mathscr{A}}$.
We denote by $\{p_{i}^{+}, p_{i}^{-}\}$ the pair of points on two new boundary components obtained by cutting $N$ along a two-sided curve and compactifying it naturally, and by $q_{i}$ a point on a new boundary component obtained by cutting $N$ along a one-sided curve and compactifying it naturally.
We define ${\rm Homeo}'(N_{\mathscr{A}})$ as the set of mapping classes $\tau$ which satisfy $\tau(\{p_{i}^{+}, p_{i}^{-}\})=\{p_{j}^{+}, p_{j}^{-}\}$ and $\tau(q_{i})=q_{j}$, and if the orientation of the boundary component which has $p_{i}^{+}$ is the same as (resp. opposite to) the boundary component which has $\tau(p_{i}^{+})$, then the orientation of the boundary component which has $p_{i}^{-}$ is the same as (resp. opposite to) the boundary component which has $\tau(p_{i}^{-})$.
Set $\mathscr{M}'(N_{\mathscr{A}})=\pi_{0}({\rm Homeo}_{+}(N_{\mathscr{A}}))$.
We consider the capping homomorphism $\eta_{1}\colon\mathscr{M}'(N_{\mathscr{A}})\rightarrow\mathscr{M}(\widehat{N-A})$ induced by the inclusion $N_{\mathscr{A}}\hookrightarrow\widehat{N-A}$.
Note that the Dehn twists along the new boundary components are not isotopic to the identity in $\mathscr{M}'(N_{\mathscr{A}})$.
We define a homomorphism $\nu_{1}\colon\mathscr{M}(\widehat{N-A})\rightarrow\mathscr{M}(N_{\mathscr{A}})$ by $\nu_{1}(\tau)=\tau|_{N_{\mathscr{A}}}$ for any $\tau\in\mathscr{M}(\widehat{N-A})$.
Since each $\tau\in\mathscr{M}'(N_{\mathscr{A}})$ is compatible with regluing $N_{\mathscr{A}}$, we have a homomorphism $\nu_{2}$ from $\mathscr{M}'(N_{\mathscr{A}})$ to $\mathscr{M}_{\mathscr{A}}(N)$.
This process determines the homomorphism $\nu_{2}$ from $\mathscr{M}'(N_{\mathscr{A}})$ to $\mathscr{M}_{\mathscr{A}}(N)$.
Let $\nu_{3}$ be a restriction of $\nu_{2}$ to ${\rm Ker}(\eta_{1})$.
Then, we obtain the following commutative diagram.
\[\xymatrix{
1 \ar[r] & {\rm Ker}(\eta_{1}) \ar[r] \ar@{->>}[d]^{\nu_{3}} \ar@{}[dr]|\circlearrowleft & \mathscr{M}'(N_{\mathscr{A}}) \ar[r]^{\eta_{1}} \ar@{->>}[d]^{\nu_{2}} \ar@{}[dr]|\circlearrowleft & \mathscr{M}(\widehat{N-A}) \ar[d]^{\nu_{1}}_{\cong} \\
1 \ar[r] & {\rm Ker}(\Lambda) \ar[r] & \mathscr{M}_{\mathscr{A}}(N) \ar[r]^{\Lambda} & \mathscr{M}(N_{\mathscr{A}}) \\
}\]
The homomorphism $\nu_{1}\colon\mathscr{M}(\widehat{N-A})\rightarrow\mathscr{M}(N_{\mathscr{A}})$ is isomorphism by our definition of the mapping class groups.
The homomorphism $\nu_{2}\colon\mathscr{M}'(N_{\mathscr{A}})\rightarrow\mathscr{M}_{\mathscr{A}}(N)$ is surjective.
Actually, for any $\tau\in\mathscr{M}_{\mathscr{A}}(N)$ there exists a representative $t$ of $\tau$ and an admissible subset $A$ of representatives of $\mathscr{A}$ such that $t(A)=A$ and $t$ maps each two-sided (resp. one-sided) curve to a two-sided (resp. one-sided) curve.
Hence $\tau|_{N_{\mathscr{A}}}$ is an element of $\mathscr{M}'(N_{\mathscr{A}})$, and so it is a lift of $\tau$ to $\mathscr{M}'(N_{\mathscr{A}})$.
The homomorphism $\nu_{3}\colon{\rm Ker}(\eta_{1})\rightarrow{\rm Ker}(\Lambda)$ is surjective by the five-lemma.
Thus, ${\rm Ker}(\Lambda)$ is generated by at most the projections of the generating set of ${\rm Ker}(\eta_{1})$.
By Theorem~\ref{kernel_of_reduction_homomorphism}, we have ${\rm Ker}(\eta_{1})=\langle\tau_{\alpha}\mid\alpha\in\mathscr{A}\rangle\cong\mathbb{Z}^{N}$, where $N=2{\rm card}(\mathscr{A}^{\rm two})+{\rm card}(\mathscr{A}^{\rm one})$.
We know that for each regular neighborhood $\alpha$ of a one-sided curve on $N$, the Dehn twist $\tau_{\alpha}$ along $\alpha$ is not contained in ${\rm Ker}(\Lambda)$ since it is a trivial element in $\mathscr{M}(N_{\mathscr{A}})$.
Moreover, both two Dehn twists along the boundary components in $N_{\mathscr{A}}$ which comes from the same $\alpha\in\mathscr{A}$ as the boundary components of the regular neighborhood of $\alpha$ in $N$ project $\tau_{\alpha}$ in ${\rm Ker}(\Lambda)$.
Therefore ${\rm Ker}(\Lambda)=\langle\tau_{\alpha}\mid\alpha\in\mathscr{A}^{\rm two}\rangle$.
\end{proof}

Let $\Gamma(N_{\mathscr{A}})=\{N_{i}\mid i\in I\}$ be the set of the connected components of $N_{\mathscr{A}}$, and so $N_{\mathscr{A}}=\coprod_{i\in I} N_{i}$.
There is a natural representation $\varphi\colon\mathscr{M}(N_{\mathscr{A}})\rightarrow{\rm Aut}(\partial N_{\mathscr{A}})$ which arises from the permutation of components.
The kernel ${\rm Ker}(\varphi)$ of $\varphi$ is isomorpic to $\bigoplus_{i\in I}\mathscr{M}(N_{i})$.
If $\tau\in\mathscr{M}(N_{\mathscr{A}})$, then for some exponent $n$, $\tau^{n}$ is contained in ${\rm Ker}(\varphi)$.
For any such an exponent, we refer to the element of $\mathscr{M}(N_{i})$ obtained by restricting $\tau^{n}$ as {\it restrictions} of $\tau$.

Let $S$ be the double covering orientable surface of $N$.
Wu~\cite{Wu87} proved that a mapping class $\tau\in\mathscr{M}(N)$ is of {\it finite order} (resp. {\it reducible}, {\it pseudo-Anosov}) if $\iota(\tau)\in\mathscr{M}(S)$ is of {\it finite order} (resp. {\it reducible}, {\it pseudo-Anosov}).
Moreover according to Thurston~\cite{Thurston88}, there is Nielsen-Thurston classification for $\mathscr{M}(N)$ (the proof is found in the paper of Wu~\cite{Wu87}):

\begin{theorem}{\rm(}\cite{Thurston88}, \cite[Theorem 2]{Wu87}{\rm)}
A homeomorphism $t$ on $N$ is reduced and is not of finite order if and only if $t$ is isotopic to a pseudo-Anosov homeomorphism.
\end{theorem}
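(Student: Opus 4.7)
The plan is to reduce to the orientable Nielsen--Thurston classification via the orientation double cover $p\colon S\to N$ and the injection $\iota\colon\mathscr{M}(N)\hookrightarrow\mathscr{M}(S)$ from the introduction. The paragraph just above the statement recalls Wu's correspondence: a class $\tau\in\mathscr{M}(N)$ is of finite order (resp.\ reducible, pseudo-Anosov) if and only if $\iota(\tau)\in\mathscr{M}(S)$ is. Combined with Thurston's classical trichotomy on the orientable surface $S$---every class is exactly one of finite order, reducible, or pseudo-Anosov, and the three types are mutually exclusive---the theorem should fall out formally.

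Concretely, the argument I would write is a two-step bookkeeping. First, for the easy direction, suppose $t$ represents a pseudo-Anosov class $\tau$ on $N$. Then $\iota(\tau)$ is pseudo-Anosov on $S$ and hence neither reducible nor of finite order there; applying Wu's correspondence in its contrapositive form shows that $\tau$ itself is irreducible and of infinite order, so $t$ is reduced and not of finite order. For the converse, suppose $t$ is reduced and of infinite order. Wu's correspondence together with the injectivity of $\iota$ yields that $\iota(\tau)$ is neither reducible nor of finite order on $S$; the orientable Nielsen--Thurston theorem then forces $\iota(\tau)$ to be pseudo-Anosov, and a final application of Wu's correspondence lifts this back to $\tau$ being pseudo-Anosov on $N$.

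The hard part conceptually is not in the formal bookkeeping above but inside Wu's correspondence itself, specifically in the implication that reducibility of $\iota(\tau)$ on $S$ forces reducibility of $\tau$ on $N$. If one wished to reprove this rather than simply cite \cite{Wu87}, the natural route is to observe that the deck involution $\sigma$ of $p$ commutes with $\iota(\tau)$ (because $\tau$ descends to $N$), and hence $\sigma$ preserves the canonical Birman--Lubotzky--McCarthy reduction system of $\iota(\tau)$; being $\sigma$-invariant, that canonical system descends to an essential reducing multicurve for $\tau$ on $N$. Once the three-way correspondence is established as an equivalence, the theorem follows at once from the orientable Nielsen--Thurston classification.
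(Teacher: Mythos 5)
The paper does not actually prove this statement: it is quoted as an external result, attributed to Thurston with the remark that ``the proof is found in the paper of Wu,'' so there is no in-paper argument to match yours against. Your route---lift through the orientation double cover, invoke Wu's three-way correspondence between the types of $\tau$ and of $\iota(\tau)$, and then apply the orientable Nielsen--Thurston trichotomy---is the standard one, and is essentially Wu's own proof, so as a reconstruction of the cited source it is on target. Two remarks. First, a small misstatement: on $S$ the three types are not mutually exclusive; a finite-order class can perfectly well be reducible. Only the pseudo-Anosov type excludes the other two, which is all your bookkeeping actually uses, so nothing breaks. (Also note that the forward direction, pseudo-Anosov implies irreducible and of infinite order, follows immediately from Definition~\ref{definition_of_pA_and_reducible} without any appeal to the cover.) Second, and more substantively: you correctly locate the real content in the descent direction of Wu's correspondence, but your sketch of it has a gap. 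A $\sigma$-invariant essential multicurve on $S$ need not project to an admissible system of \emph{essential} curves on $N$: if $c$ and $\sigma(c)$ are the two boundary components of the preimage annulus of a M\"obius band neighborhood of a one-sided curve $\mu$ in $N$, they are isotopic essential curves in $S$ whose projection bounds a M\"obius band and is therefore inessential in $N$ by the paper's convention. One must detect this configuration and replace the projected curve by the one-sided core $\mu$ (and check $\mu$ is essential and $\tau$-invariant). This is exactly where non-orientability intervenes and is the step a careless descent argument would miss; with it supplied, or with Wu simply cited for the full equivalence, your proof is complete.
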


Then we define pseudo-Anosov and reducible mapping classes on non-orientable surfaces according to Birman-Lubotzky-McCarthy~\cite{Birman-Lubotzky-McCarthy83} as follows:

\begin{definition}\label{definition_of_pA_and_reducible}
A mapping class $\tau\in\mathscr{M}(N_{\mathscr{A}})$ is {\it pseudo-Anosov} if $\mathscr{S}(N_{i})\not=\emptyset$ for every $i\in I$ and $\tau^{n}(\alpha)\not=\alpha$ for any $\alpha\in\mathscr{S}(N_{\mathscr{A}})$ and any $n\not=0$.
A mapping class $\tau\in\mathscr{M}(N_{\mathscr{A}})$ is {\it reducible} if there exists an admissible subset $\mathscr{A}$ such that $\tau(\mathscr{A})=\mathscr{A}$.
\end{definition}

We call the admissible set $\mathscr{A}$ as in Definition~\ref{definition_of_pA_and_reducible} a {\it reduction system} for $\tau$, and we say each $\alpha\in\mathscr{A}$ as a {\it reduction class} for $\tau$.
A reduction system $\mathscr{A}$ for $\tau$ is an {\it adequate reduction system} if the restrictions of $\tau$ to each component of $N_{\mathscr{A}}$ are either of finite order or pseudo-Anosov.
If $\tau\in\mathscr{M}(N)$ is either of finite order or pseudo-Anosov mapping class on $N$, we call $\tau$ is {\it adequately reduced}. 

We restate Thurston's theorem by using adequate reduction system as follows:

\begin{theorem}{\rm(}\cite{Thurston88}{\rm)}\label{thurston's_theorem}
Every mapping class $\tau\in\mathscr{M}(N)$ is either reducible or adequately reduced.
If $\tau$ is reducible, then there exists an adequate reduction system $\mathscr{A}$ for $\tau$.
\end{theorem}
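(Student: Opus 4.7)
The plan is to lift the question to the orientation double cover $p\colon S\to N$ (with $S=S_{g-1,2n}$) and apply the classical Nielsen--Thurston theorem together with the quoted result of Wu that matches dynamical types. Write $\sigma$ for the deck involution and $\iota\colon\mathscr{M}(N)\hookrightarrow\mathscr{M}(S)$ for the injection from the introduction. Given $\tau\in\mathscr{M}(N)$, set $\widetilde\tau=\iota(\tau)$; the classical Nielsen--Thurston theorem says $\widetilde\tau$ is of finite order, pseudo-Anosov, or reducible, and by Wu's correspondence $\tau$ has the same type. In the first two cases $\tau$ is adequately reduced with empty reduction system (Definition~\ref{definition_of_pA_and_reducible}), so only the reducible case requires work.

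Assume $\widetilde\tau$ is reducible. By Birman--Lubotzky--McCarthy, $\widetilde\tau$ admits a minimal adequate reduction system $\widetilde{\mathscr{A}}$ on $S$, unique up to isotopy. Because $\widetilde\tau=\iota(\tau)$ descends to $N$, one may choose a representative $\widetilde t$ of $\widetilde\tau$ commuting with $\sigma$; hence $\sigma(\widetilde{\mathscr{A}})$ is again a minimal adequate reduction system for $\widetilde\tau$, and uniqueness forces $\sigma(\widetilde{\mathscr{A}})=\widetilde{\mathscr{A}}$. Now project: set $\mathscr{A}:=p(\widetilde{\mathscr{A}})$. A curve $\widetilde a\in\widetilde{\mathscr{A}}$ with $\sigma(\widetilde a)=\widetilde a$ as a set (with $\sigma$ acting freely on $\widetilde a$) descends to a one-sided curve in $N$, while a free orbit $\{\widetilde a,\sigma(\widetilde a)\}$ descends to a two-sided curve. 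The elements of $\mathscr{A}$ are pairwise disjoint (after isotopy) and essential in the paper's sense: a disk or M\"obius band bounded in $N$ by a curve of $\mathscr{A}$ would lift to a disk or annulus bounded in $S$ by a curve of $\widetilde{\mathscr{A}}$, contradicting essentiality of $\widetilde{\mathscr{A}}$. Moreover $\tau(\mathscr{A})=\mathscr{A}$ because $\widetilde\tau(\widetilde{\mathscr{A}})=\widetilde{\mathscr{A}}$.

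To see that $\mathscr{A}$ is adequate for $\tau$: each component $N_i\subseteq N_{\mathscr{A}}$ is covered by either a single $\sigma$-invariant component of $S_{\widetilde{\mathscr{A}}}$ (a nontrivial double cover of $N_i$) or by two components swapped by $\sigma$ (a trivial cover); in either case some power of $\widetilde\tau$ restricts to a finite-order or pseudo-Anosov map on those pieces, and Wu's correspondence applied componentwise transfers this to the restriction of $\tau$ to $N_i$. This yields the required adequacy.

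The main obstacle I expect is the projection step. Specifically, one must verify that $p(\widetilde{\mathscr{A}})$ really gives a pairwise disjoint family of essential curves on $N$ in the paper's strict sense (no curves bounding a M\"obius band, no inessential duplicates, and the right bookkeeping between $\sigma$-setwise-fixed curves and one-sided images). A related but lesser concern is checking that Wu's classification is directly applicable to the restrictions on each possibly non-orientable $N_i$, whose own orientation double cover is realised by the $\sigma$-invariant or $\sigma$-swapped pieces of $S_{\widetilde{\mathscr{A}}}$ described above.
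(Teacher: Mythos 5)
The paper does not prove this statement: Theorem~\ref{thurston's_theorem} is quoted as Thurston's theorem, with the text explicitly deferring the proof to Wu~\cite{Wu87} (``the proof is found in the paper of Wu''). So there is no in-paper argument to compare yours against; what you have written is an attempt to supply a proof the paper treats as a citation. Your double-cover strategy is in the spirit of Wu's, and the skeleton (classical Nielsen--Thurston on $S$, equivariance of the canonical reduction system $\mathscr{A}_{\widetilde\tau}$ under the deck involution via $\sigma(\mathscr{A}_{\widetilde\tau})=\mathscr{A}_{\sigma\widetilde\tau\sigma^{-1}}$, then descent) is reasonable.

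However, the projection step as written contains a genuine error, precisely where you flagged your own doubt. You claim that a curve of $\mathscr{A}=p(\widetilde{\mathscr{A}})$ bounding a M\"obius band in $N$ would ``lift to a disk or annulus bounded in $S$ by a curve of $\widetilde{\mathscr{A}}$, contradicting essentiality of $\widetilde{\mathscr{A}}$.'' This is not a contradiction: the M\"obius band lifts to an annulus whose two boundary circles are the two lifts of the curve, and a curve cobounding an annulus with another curve is perfectly essential in $S$. What actually happens in that situation is that the two lifts are isotopic, i.e.\ the class $\widetilde\alpha$ satisfies $\sigma(\widetilde\alpha)=\widetilde\alpha$. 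So the descent must be split into cases: for a $\sigma$-fixed isotopy class you must choose a genuinely $\sigma$-equivariant representative (one on which $\sigma$ acts freely), which projects to a one-sided curve --- and you need to justify that such an equivariant realization exists and that the whole $\sigma$-invariant family $\widetilde{\mathscr{A}}$ can be realized $\sigma$-equivariantly and pairwise disjointly at once; for a free orbit $\{\widetilde\alpha,\sigma(\widetilde\alpha)\}$ with $\widetilde\alpha\neq\sigma(\widetilde\alpha)$, the projected two-sided curve cannot bound a M\"obius band exactly because its two lifts are non-isotopic. Without this case analysis the projected system need not consist of essential curves of $N$ in the paper's sense. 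Two lesser issues: $\sigma$ is orientation-reversing, so the conjugation-invariance of $\mathscr{A}_{\widetilde\tau}$ must be invoked for the extended mapping class group; and invoking Wu's type correspondence for the reducible case is close to circular, since the content of that correspondence in the reducible direction is essentially the descent of a reduction system that you are trying to establish.
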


Let $i\colon\mathscr{S}(N)\times\mathscr{S}(N)\rightarrow\mathbb{N}\cup\{0\}$ be the geometric intersection number.
A reduction class $\alpha$ for $\tau$ is {\it essential} if for each $\beta\in\mathscr{S}(N)$ such that $i(\alpha, \beta)\not=0$ and each integer $m\not=0$, the class $\tau^{m}(\beta)$ is distinct from $\beta$.
We often say $\alpha$ is essential if $\alpha$ is an essential reduction system for some $\tau$.

\begin{proposition}{\rm(}\cite[Proposition 2.3]{Birman-Lubotzky-McCarthy83}{\rm)}
Let $\alpha\in\mathscr{A}$ and $\alpha'\in\mathscr{A}'$ be reduction classes for $\tau\in\mathscr{M}(N)$.
Suppose that $\alpha$ is essential.
Then $i(\alpha, \alpha')=0$.
\end{proposition}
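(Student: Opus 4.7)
The plan is a short proof by contradiction that mirrors exactly the argument of Birman--Lubotzky--McCarthy \cite[Proposition 2.3]{Birman-Lubotzky-McCarthy83}, since nothing in the essentiality definition or the notion of reduction class depended on orientability. Suppose for contradiction that $i(\alpha, \alpha') \neq 0$. The strategy is to apply the essentiality hypothesis on $\alpha$ with test class $\beta = \alpha'$ and derive a contradiction with the fact that $\alpha'$ must be periodic under $\tau$.

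The key observation I would record first is that any reduction class is $\tau$-periodic. Indeed, by definition $\alpha' \in \mathscr{A}'$ where $\mathscr{A}'$ is an admissible subset of $\mathscr{S}(N)$ with $\tau(\mathscr{A}') = \mathscr{A}'$. An admissible subset is finite (its representatives can be realized as pairwise disjoint essential simple closed curves on the compact surface $N$, of which there are only finitely many isotopy classes once disjointness is imposed), so $\tau$ acts as a permutation of the finite set $\mathscr{A}'$. Consequently there exists an integer $m \neq 0$ with $\tau^{m}(\alpha') = \alpha'$.

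Now I would invoke the essentiality of $\alpha$. Under the standing assumption $i(\alpha, \alpha') \neq 0$, the class $\beta := \alpha'$ satisfies the hypothesis of the definition of essential reduction class, so $\tau^{m}(\alpha') \neq \alpha'$ for every $m \neq 0$. This directly contradicts the periodicity established in the previous step, so $i(\alpha,\alpha')=0$ as claimed.

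There is essentially no obstacle here: the proof is a one-step contradiction once finiteness of admissible sets is noted, and neither one-sidedness of curves nor any peculiarity of $\mathscr{M}(N)$ enters. The only mild care needed is that the definition of essentiality given just above the proposition is phrased in terms of an arbitrary test class $\beta \in \mathscr{S}(N)$, so I want to make explicit in the write-up that a reduction class $\alpha'$ is itself an element of $\mathscr{S}(N)$ and hence a legitimate choice of $\beta$.
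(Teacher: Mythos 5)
Your proof is correct and is exactly the argument of Birman--Lubotzky--McCarthy, which the paper simply cites without reproducing: an admissible set is finite, so $\tau$ permutes $\mathscr{A}'$ and some power fixes $\alpha'$, contradicting essentiality of $\alpha$ applied with $\beta=\alpha'$. Nothing in this argument is sensitive to orientability, so no adjustment is needed for $N$.
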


The following lemma is different from the corresponding lemma (\cite[Lemma 2.4]{Birman-Lubotzky-McCarthy83}) by Birman-Lubotzky-McCarthy.

\begin{lemma}\label{existance_of_curves_for_fixed_arc_by_homeo}
Let $F$ be a compact connected orientable or non-orientable surface with $\chi(F)<0$.
Fix any isotopy class $\delta$ of properly embedded arc $d$ on $F$, namely, $\partial d$ is embedded in $\partial F$ and the interior of $d$ is embedded in the interior of $F$.
We choose any $\tau\in\mathscr{M}(F)$ with $\tau(\delta)=\delta$.
Then one of the following occurs.
\begin{itemize}
\item[(1)] $F$ is either $S_{0, 3}$ or $N_{1, 2}$ or $N_{2, 1}$.
\item[(2)] If $\delta$ is an isotopy class of an arc which connects distinct two boundary components, then there exists $\gamma\in\mathscr{S}(F)$ such that $\tau(\gamma)=\gamma$ and $i(\alpha, \gamma)\not=0$ for any $\alpha\in\mathscr{S}(F)$ with $i(\alpha, \delta)\not=0$.
\item[(3)] If $\delta$ is an isotopy class of an arc which connects one boundary component, goes through crosscaps even number of times, and surrounds one crosscap, then for any $\alpha\in\mathscr{S}(F)$ excepting $\beta_{0}$ which is shown in Figure~\ref{fig_beta0_beta1_beta2} with $i(\alpha, \delta)\not=0$ there exists $\gamma\in\mathscr{S}(F)$ such that $\tau(\gamma)=\gamma$ and $i(\alpha, \gamma)\not=0$.
\item[(4)] If $\delta$ is an isotopy class of an arc which connects one boundary component, goes through crosscaps even number of times, and does not surround one crosscap, then for any $\alpha\in\mathscr{S}(F)$ with $i(\alpha, \delta)\not=0$ there exists $\gamma\in\mathscr{S}(F)$ such that $\tau(\gamma)=\gamma$ and $i(\alpha, \gamma)\not=0$.
\item[(5)] If $\delta$ is an isotopy class of an arc which connects one boundary component and goes through crosscaps odd number of times, then for any $\alpha\in\mathscr{S}(F)$ excepting $\beta_{1}$ and $\beta_{2}$ which are shown in Figure~\ref{fig_beta0_beta1_beta2} with $i(\alpha, \delta)\not=0$ there exists $\gamma\in\mathscr{S}(F)$ such that $\tau(\gamma)=\gamma$ and $i(\alpha, \gamma)\not=0$.
\end{itemize}
\end{lemma}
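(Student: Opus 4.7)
The plan is to produce $\gamma$ as a component of the frontier of a closed regular neighborhood $R$ of $d$ taken together with the boundary component(s) of $F$ meeting $\partial d$. Since $\tau$ fixes each component of $\partial F$ setwise and preserves the isotopy class $\delta$, the neighborhood $R$ is $\tau$-invariant up to isotopy, so $\tau$ acts on the finitely many boundary components of $R$; the task is to single out one such component $\gamma$ that is individually $\tau$-fixed and is met by every non-exceptional $\alpha$ with $i(\alpha,\delta)\neq 0$.

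For case (2), let $\partial_{1},\partial_{2}$ be the two distinct boundary components of $F$ containing $\partial d$ and let $R$ be a regular neighborhood of $d\cup\partial_{1}\cup\partial_{2}$. Then $R$ is a pair of pants with boundary $\partial_{1}\cup\partial_{2}\cup\gamma$ for a single new simple closed curve $\gamma$, which is essential unless $F=S_{0,3}$ and is automatically $\tau$-fixed, being the unique component of $\partial R$ not in $\partial F$. In cases (3) and (4), $\partial d$ lies in a single boundary component $\partial_{1}$ and $d$ passes through crosscaps an even number of times, so a tubular neighborhood of $d$ is an annulus and $R$, the regular neighborhood of $d\cup\partial_{1}$, is a pair of pants with boundary $\partial_{1}\cup\gamma_{1}\cup\gamma_{2}$. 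In case (3) exactly one of $\gamma_{1},\gamma_{2}$ bounds a M\"obius band containing the surrounded crosscap (and so is inessential); take $\gamma$ to be the other, essential component. In case (4) both $\gamma_{i}$ are essential, and one may take $\gamma$ to be whichever of them is fixed by $\tau$, the two being distinguishable by the topological types of the subsurfaces they cobound in $F\setminus R^{\circ}$. For case (5), $d$ passes through an odd number of crosscaps, so a tubular neighborhood of $d$ is a M\"obius band; hence $R$ is a copy of $N_{1,2}$ with boundary $\partial_{1}\cup\gamma$ for a single two-sided $\gamma$, essential outside case (1) and automatically $\tau$-fixed.

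The verification that $i(\alpha,\gamma)\neq 0$ for every non-exceptional $\alpha$ with $i(\alpha,\delta)\neq 0$ is then handled by a standard bigon-removal argument: choose representatives realizing all pairwise geometric intersection numbers, and inspect the arcs of $\alpha\cap R$. Any such arc with an endpoint on $d$ must exit $R$ through $\partial R\setminus\partial F$, so if it fails to meet $\gamma$ it must leave through the inessential boundary component of $R$ in case (3) or stay inside $R$ in case (5). An exhaustive case-check of how such an $\alpha$ can close up produces exactly the exceptional class $\beta_{0}$ in case (3), the core of the surrounded crosscap, and the two classes $\beta_{1},\beta_{2}$ in case (5), the one-sided essential curves contained in the $N_{1,2}$-region $R$, all of which are displayed in Figure~\ref{fig_beta0_beta1_beta2}.

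The main obstacle will be precisely this final exhaustive analysis in the non-orientable cases (3)--(5): unlike the orientable analogue \cite[Lemma 2.4]{Birman-Lubotzky-McCarthy83}, a simple closed curve in $F$ can traverse a crosscap and thereby avoid a chosen two-sided curve while still essentially crossing the fixed arc $d$, so one must explicitly extract the exceptional classes $\beta_{0},\beta_{1},\beta_{2}$ and certify that no others can occur. The small-surface exceptions collected in case (1), where $R$ fills $F$ and the candidate $\gamma$ degenerates, are dealt with by a separate finite check on $S_{0,3}$, $N_{1,2}$ and $N_{2,1}$.
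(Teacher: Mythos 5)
Your proposal follows essentially the same route as the paper's proof: take a regular neighborhood of $d$ together with the boundary component(s) of $F$ meeting $\partial d$, split into the cases where this neighborhood is $S_{0,3}$ (arc through an even number of crosscaps) or $N_{1,2}$ (odd number), use the frontier components as candidates for $\gamma$, recover the exceptional surfaces $S_{0,3}$, $N_{1,2}$, $N_{2,1}$ when those candidates are inessential, and identify $\beta_{0},\beta_{1},\beta_{2}$ as the curves meeting $\delta$ but missing $\gamma$. The argument is correct and matches the paper's at the same level of detail, so no further comparison is needed.
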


\begin{proof}
Let $d$ be a properly embedded representative of $\delta$, and $\eta(d)$ a regular neighborhood of $d$ with the boundary components of $F$ which have the end points of $d$.
If $d$ connects distinct two boundary components (Case (a)) or connects one boundary component and goes through crosscaps even number of times (Case (b)), then $\eta(d)$ is homeomorpic to $S_{0, 3}$.
If $d$ connects one boundary component and goes through crosscaps odd number of times (Case (c)), then $\eta(d)$ is homeomorphic to $N_{1, 2}$.
In Case (a), two of the boundary component of $\eta(d)$ are those of $F$.
We denote by $\gamma$ the isotopy class of the other boundary component of $\eta(d)$.
Then $\gamma$ is not isotopic to a point since $\chi(F)<0$.
If $\gamma$ is isotopic to a component of $\partial F$ (resp. a crosscap), then $F$ is homeomorphic to $S_{0, 3}$ (resp. $N_{1, 2}$).
We suppose that $\gamma$ is an essential curve.
Since $\tau(\delta)=\delta$, it follows that $\tau(\gamma)=\gamma$.
If $\alpha\in\mathscr{S}(F)$ intersects $\delta$ nontrivially, then $i(\alpha, \gamma)\not=0$.
In Case (b), one of the boundary component of $\eta(d)$ is that of $F$.
We put $\gamma_{2}$ and $\gamma_{3}$ as the isotopy classes of the other two components.
Neither $\gamma_{2}$ nor $\gamma_{3}$ is isotopic to a point.
Firstly we suppose that $\gamma_{2}$ is parallel to $\partial F$.
If $\gamma_{3}$ is isotopic to a component of $\partial F$ (resp. a crosscap), then $F$ is homeomorphic to $S_{0, 3}$ (resp. $N_{1, 2}$).
We suppose that $\gamma_{3}$ is an essential curve.
Then it follows that $\tau(\gamma_{3})=\gamma_{3}$ and $i(\alpha, \gamma_{3})\not=0$ for any $\alpha\in\mathscr{S}(F)$ with $i(\alpha, \delta)\not=0$.
Next we suppose that $\gamma_{2}$ is isotopic to a crosscap.
If $\gamma_{3}$ is isotopic to a component of $\partial F$ (resp. a crosscap), then $F$ is homeomorphic to $N_{1, 2}$ (resp. $N_{2, 1}$).
We suppose that $\gamma_{3}$ is an essential curve.
Unless $\alpha$ is isotopic $\beta_{0}$ in Figure~\ref{fig_beta0_beta1_beta2}, we see $i(\alpha, \gamma_{3})\not=0$ for any $\alpha\in\mathscr{S}(F)$ with $i(\alpha, \delta)\not=0$.
We can show similar results if $\gamma_{2}$ is essential.
In Case (c), one of the boundary component of $\eta(d)$ is that of $F$.
We denote by $\gamma$ the isotopy class of the other boundary component of $\eta(d)$.
We see $\gamma$ is not isotopic to a point.
If $\gamma$ is isotopic to a component of $\partial F$ (resp. a crosscap), then $F$ is homeomorphic to $N_{1 ,2}$ (resp. $N_{2, 1}$).
We suppose that $\gamma$ is an essential curve.
Since $\tau(\delta)=\delta$, it follows that $\tau(\gamma)=\gamma$.
Note that $\gamma$ bounds $N_{1, 1}$.
We can take only $\beta_{1}$ and $\beta_{2}$ which intersect $\delta$ and do not intersect $\gamma$ as shown in Figure~\ref{fig_beta0_beta1_beta2}.
Unless $\alpha$ is isotopic to $\beta_{1}$ or $\beta_{2}$ in Figure~\ref{fig_beta0_beta1_beta2}, we see $i(\alpha, \gamma)\not=0$ for any $\alpha\in\mathscr{S}(F)$ with $i(\alpha, \delta)\not=0$.
\end{proof}

\begin{figure}[h]
\includegraphics[scale=0.60]{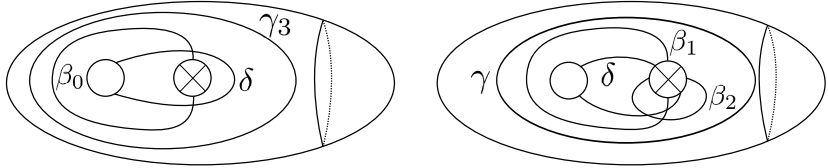}
\caption{Curves $\beta_{0}$, $\beta_{1}$, and $\beta_{2}$.}\label{fig_beta0_beta1_beta2}
\end{figure}

The result of the following lemma is the same as the orientable surface case by Birman-Lubotzky-McCarthy (\cite[Lemma 2.5]{Birman-Lubotzky-McCarthy83}), while the statement of Lemma~\ref{existance_of_curves_for_fixed_arc_by_homeo} is different from the orientable surface case by them (\cite[Lemma 2.4]{Birman-Lubotzky-McCarthy83}).

\begin{lemma}\label{the_minimal_adequate_reduction_system}
Let $\mathscr{A}$ be an adequate reduction system for $\tau$ and let $\alpha\in\mathscr{A}$.
Set $\mathscr{A}'=\mathscr{A}-\{\alpha\}$.
Then the following are equivalent:
\begin{itemize}
\item[(1)] $\alpha$ is essential.
\item[(2)] $\mathscr{A}'$ is not an adequate reduction system for $\tau^{m}$ for any $m\not=0$.
\end{itemize}
\end{lemma}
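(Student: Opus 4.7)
The plan is to prove both implications by contrapositive, following the strategy of~\cite[Lemma 2.5]{Birman-Lubotzky-McCarthy83} but substituting Lemma~\ref{existance_of_curves_for_fixed_arc_by_homeo} for its orientable analogue.

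For \emph{not (1) implies not (2)}: assume $\alpha$ is inessential. I choose $\beta\in\mathscr{S}(N)$ with $i(\alpha,\beta)>0$ and some $m\neq 0$ with $\tau^{m}(\beta)=\beta$, and pass to a multiple $M$ of $m$ such that $\tau^{M}$ fixes every element of $\mathscr{A}$ setwise, preserves every component of both $N_{\mathscr{A}}$ and $N_{\mathscr{A}'}$, fixes $\beta$, and restricts on each component of $N_{\mathscr{A}}$ to the identity or a pseudo-Anosov map. Components of $N_{\mathscr{A}'}$ not containing $\alpha$ are already components of $N_{\mathscr{A}}$, so only the component $N_{j}$ containing $\alpha$ needs attention; cutting $N_{j}$ along $\alpha$ recovers one or two sub-components of $N_{\mathscr{A}}$.

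The arcs of $\beta\cap N_{j}$ produced by this cutting are essential properly embedded arcs, each fixed up to isotopy by the appropriate restriction of $\tau^{M}$. Applying Lemma~\ref{existance_of_curves_for_fixed_arc_by_homeo} to each sub-component --- noting that the excluded surfaces $S_{0,3}$, $N_{1,2}$, $N_{2,1}$ have mapping class groups so small that $\tau^{M}$ is automatically of finite order there --- produces a $\tau^{M}$-invariant essential simple closed curve in every non-exceptional sub-component. Since pseudo-Anosov classes fix no essential curve, the restriction of $\tau^{M}$ to every sub-component of $N_{j}\setminus\alpha$ must be the identity. Consequently $\tau^{M}|_{N_{j}}$ is supported in a regular neighborhood of $\alpha$: it is trivial when $\alpha$ is one-sided (by Lemma~\ref{kernel_of_reduction_homomorphism}(1)), and equal to $\tau_{\alpha}^{k}$ for some $k\in\mathbb{Z}$ when $\alpha$ is two-sided. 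A nonzero $k$ would make $\alpha$ an essential reduction class for $\tau_{\alpha}^{k}$, and hence for $\tau$, contradicting the hypothesis; so $k=0$, and $\mathscr{A}'$ is an adequate reduction system for $\tau^{M}$.

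For \emph{not (2) implies not (1)}: assume $\mathscr{A}'$ is adequate for some $\tau^{m}$. Passing to a power $M$, I arrange $\tau^{M}$ to preserve every component of $N_{\mathscr{A}'}$ and to fix $\alpha$ as an isotopy class in $N_{j}$. Then $\tau^{M}|_{N_{j}}$ is of finite order or pseudo-Anosov, but the latter is excluded because it fixes the essential class $\alpha$. A further power $\tau^{ML}|_{N_{j}}$ is therefore trivial in $\mathscr{M}(N_{j})$, and any $\gamma\in\mathscr{S}(N_{j})$ with $i(\gamma,\alpha)>0$ satisfies $\tau^{ML}(\gamma)=\gamma$, witnessing the non-essentiality of $\alpha$. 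Such a $\gamma$ exists because $N_{j}$ carries $\alpha$ as an essential curve and so has enough complexity to contain a curve crossing $\alpha$.

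The main obstacle I anticipate lies in the first direction: handling the exceptional configurations in Lemma~\ref{existance_of_curves_for_fixed_arc_by_homeo} where the arc produced from $\beta$ lands in case (3) or (5), so that only curves distinct from $\beta_{0},\beta_{1},\beta_{2}$ admit a companion invariant curve, and rigorously concluding $k=0$ in the two-sided case. Making the equivalence between essentiality of $\alpha$ for $\tau^{M}|_{N_{j}}$ and for $\tau\in\mathscr{M}(N)$ precise --- and confirming that it is preserved by the reduction homomorphism --- is where the non-orientable argument most visibly diverges from that of Birman-Lubotzky-McCarthy.
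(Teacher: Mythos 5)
Your overall route for the direction ``not (1) $\Rightarrow$ not (2)'' matches the paper's: pass to a power fixing the invariant curve and all the data, cut along $\mathscr{A}$, observe that the components bounded by $\alpha$ carry an invariant essential arc, invoke Lemma~\ref{existance_of_curves_for_fixed_arc_by_homeo} to force those components to be $S_{0,3}$, $N_{1,2}$, or $N_{2,1}$ (since the restrictions are adequately reduced and so fix no curve), make the restrictions trivial after a further power, and then show that the residual element $\nu=\tau_{\widehat{\alpha}'}^{k}$ left over after reducing along $\mathscr{A}'$ has $k=0$. However, there are two genuine gaps. First, your claim that $S_{0,3}$, $N_{1,2}$, $N_{2,1}$ ``have mapping class groups so small that $\tau^{M}$ is automatically of finite order there'' is false for $N_{2,1}$: its mapping class group is infinite (it contains the infinite-order Dehn twist about the unique essential two-sided curve). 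The paper is careful here -- it proves only that mapping classes of $N_{1,2}$ and $N_{2,1}$ which preserve an essential properly embedded arc \emph{and} each boundary component are of finite order; the arc hypothesis is what kills the twist, and you cannot drop it.

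Second, and more seriously, your argument for $k=0$ in the two-sided case -- ``a nonzero $k$ would make $\alpha$ an essential reduction class for $\tau_{\alpha}^{k}$, and hence for $\tau$'' -- is circular as written: the implication ``the reduction of $\tau^{M}$ acts as a nonzero twist power about $\alpha$ $\Rightarrow$ $\alpha$ is essential for $\tau$'' is essentially the contrapositive of the statement you are proving, and it is not available as a black box in this paper. The paper's proof spends its entire second half establishing $k=0$ by exploiting the \emph{specific} invariant curve $\gamma$: if $\gamma$ meets only $\alpha$ among the curves of $\mathscr{A}$, then $\widehat{\gamma}'$ is a $\nu$-invariant curve of $L$ crossing $\widehat{\alpha}'$, which forces $k=0$; if $\gamma$ also meets $\mathscr{A}'$, one analyzes $L$ case by case ($S_{0,3}$, $N_{1,2}$ with finite mapping class group, $N_{2,1}$ with its unique two-sided curve, or a larger surface) and reapplies Lemma~\ref{existance_of_curves_for_fixed_arc_by_homeo}, using that a two-sided $\widehat{\alpha}'$ cannot be one of the exceptional classes $\beta_{0},\beta_{1},\beta_{2}$, to produce a $\nu$-invariant $\delta$ with $i(\widehat{\alpha}',\delta)\neq 0$. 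You correctly identified this as the main obstacle, but it is not an afterthought -- it is the core of the lemma and the precise point where the non-orientable argument diverges from Birman--Lubotzky--McCarthy, so the proof is incomplete without it.
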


\begin{proof}
We omit the proof that (1) implies (2) because it is the same as that of Birman-Lubotzky-McCarthy.
We assume that $\alpha\in\mathscr{A}$ is not an essential reduction class for $\tau$.
Then there exists a class $\gamma\in\mathscr{S}(N)$ with $i(\alpha, \gamma)\not=0$ and $n\not=0$ such that $\tau^{n}(\gamma)=\gamma$.
Let $A$ be an admissible set of the representatives of $\mathscr{A}$.
We cut $N$ along $A$, then $\gamma$ determines a finite set of pairwise disjoint isotopy classes of properly embedded arcs in $N_{\mathscr{A}}$, which we denote by $\widehat{\gamma}$.
If a component of $N_{\mathscr{A}}$ has boundary components arising from $\alpha$, then we shall say the component is bounded by $\alpha$.
We note that at least one element of $\widehat{\gamma}$ occurs in a component of $N_{\mathscr{A}}$ bounded by $\alpha$.
We put $\Lambda\colon\mathscr{M}_{\mathscr{A}}(N)\rightarrow\mathscr{M}(N_{\mathscr{A}})$, and set $\widehat{\tau}=\Lambda(\tau)$.
Since $\tau^{n}(\gamma)=\gamma$, therefore $\widehat{\tau}^{n}(\widehat{\gamma})=\widehat{\gamma}$.
By choosing a larger exponent $n$ if necessary, we may assume that $\widehat{\tau}^{n}$ preserves each component of $N_{\mathscr{A}}$, $\partial N_{\mathscr{A}}$, and $\widehat{\gamma}$.
In particular, the restrictions of $\widehat{\tau}^{n}$ to the components of $N_{\mathscr{A}}$ bounded by $\alpha$ preserve a nontrivial isotopy class of a properly embedded arc.
By Lemma~\ref{existance_of_curves_for_fixed_arc_by_homeo}, for each such component, either the corresponding restriction of $\widehat{\tau}^{n}$ is reducible or the component is $S_{0, 3}$, $N_{1, 2}$, or $N_{2, 1}$.
Now we suppose that $\widehat{\tau}^{n}$ is adequately reduced, therefore all the restrictions $\widehat{\tau}^{n}$ have no curves which preserve.
Then it follows that each component bounded by $\alpha$ is either $S_{0, 3}$ or $N_{1, 2}$ or $N_{2, 1}$.
A pair of pants will not support a pseudo-Anosov mapping class.
We can show that the mapping classes on $N_{1, 2}$ (resp. $N_{2, 1}$) which preserve a properly embedded essential arc on it and each component of $\partial N_{1, 2}$ (resp. $\partial N_{2, 1}$) are of finite order.
Thus by choosing a larger exponent $n$ if necessary, we may assume that the restrictions of $\widehat{\tau}^{n}$ to the component bounded $\alpha$ are trivial.

From now we consider the corresponding situation when we reduce along $\mathscr{A}'$.
We define the reduction homomorphism $\Lambda'\colon\mathscr{M}_{\mathscr{A}'}(N)\rightarrow\mathscr{M}(N_{\mathscr{A}'})$.
Let $\widehat{\gamma}'$ and $\widehat{\alpha}'$ be the lift of $\gamma$ and $\alpha$ on $N_{\mathscr{A}'}$ respectively.
We see $\widehat{\alpha}'$ is an adequate reduction class for $\Lambda'(\tau^{n})$.
We denote by $L$ the component of $N_{\mathscr{A}'}$ which includes $\widehat{\alpha}'$.
We set $\nu=\Lambda'(\tau^{n})|_{L}$.
We also define the reduction homomorphism $\Lambda''\colon\mathscr{M}_{\widehat{\alpha}'}(L)\rightarrow\mathscr{M}(L_{\widehat{\alpha}'})$, and set $\nu''=\Lambda''(\nu)$.
Because the restriction of $\widehat{\tau}^{n}$ to the component of $N_{\mathscr{A}}$ bounded by $\alpha$ is identity, $\nu''$ is identity.
If $\widehat{\alpha}'$ is an isotopy class of one-sided curve, then $\nu$ is also identity by Lemma~\ref{kernel_of_reduction_homomorphism}.
If $\widehat{\alpha}'$ is an isotopy class of two-sided curve, then ${\rm Ker}(\Lambda'')=\langle\tau_{\widehat{\alpha}'}\rangle$ by Lemma~\ref{kernel_of_reduction_homomorphism}, and so there exists $k\not=0$ such that $\nu=\tau_{\widehat{\alpha}'}^{k}$.
We will prove that $k=0$ from now.
First we assume that $\gamma$ intersects only $\alpha$, that is, $\gamma\subset N-A'$ ($A'$ is an admissible set of the representatives of $\mathscr{A}'$). 
We have $i(\widehat{\alpha}', \widehat{\gamma}')\not=0$, so $\widehat{\gamma}'\in\mathscr{S}(L)$.
Since $\tau^{n}(\gamma)=\gamma$, it follows that $\nu(\widehat{\gamma}')=\widehat{\gamma}'$.
Then we see $k$ should be $0$.
Next we assume that $\gamma$ intersects other curves of $\mathscr{A}'$, that is, $i(\gamma, \mathscr{A}')\not=0$.
In this case $\widehat{\gamma}'$ is the family consists of isotopy classes of arcs which go through $L$ at least once.
If $L=S_{0, 3}$, then $\nu$ is identity by our assumption.
If $L=N_{1, 2}$, then $k$ has to be $0$ because $\mathscr{M}(N_{1, 2})$ is finite.
If $L=N_{2, 1}$, then there is only one kind of two-sided curve on $L$ shown in Figure~\ref{fig_hat_alpha}.
By Lemma~\ref{existance_of_curves_for_fixed_arc_by_homeo}, $\widehat{\alpha}'$ can not be $\beta_{0}$, $\beta_{1}$, or $\beta_{2}$ because $\widehat{\alpha}'$ is an isotopy class of two-sided curve.
Then there exists $\delta\in\mathscr{S}(L)$ such that $i(\widehat{\alpha}', \delta)\not=0$ and $\nu(\delta)=\delta$.
Hence $k=0$.
Otherwise, similarly to the previous case $\widehat{\alpha}'$ can not be $\beta_{0}$, $\beta_{1}$, or $\beta_{2}$, and so there exists $\delta\in\mathscr{S}(L)$ such that $i(\widehat{\alpha}', \delta)\not=0$ and $\nu(\delta)=\delta$.
Hence we obtain $k=0$.
From the above arguments, we prove $\mathscr{A}'$ is an adequate reduction system for $\tau$.
\end{proof}

We set $\mathscr{A}_{\tau}=\{\alpha\in\mathscr{S}(N)\mid\alpha~{\rm is~an~essential~reduction~class~for}~\tau\}$.

\begin{lemma}{\rm(}\cite[Lemma 2.6]{Birman-Lubotzky-McCarthy83}{\rm)}
\begin{itemize}
\item[(1)] $\sigma(\mathscr{A}_{\tau})=\mathscr{A}_{\sigma\tau\sigma^{-1}}$.
\item[(2)] $\mathscr{A}_{\tau^{m}}=\mathscr{A}_{\tau}$ for all $m\not=0$.
\item[(3)] $\mathscr{A}_{\tau}$ is an adequate reduction system for $\tau$.
\item[(4)] $\mathscr{A}_{\tau}\subset\mathscr{A}$ for each adequate reduction system $\mathscr{A}$ for $\tau$.
\end{itemize}
\end{lemma}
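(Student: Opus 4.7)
Parts (1) and (2) should come out of the definitions, while (3) and (4) will be handled together by showing that $\mathscr{A}_{\tau}$ coincides with the (unique) minimal adequate reduction system for $\tau$.

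For (1), any homeomorphism preserves geometric intersection numbers and satisfies $(\sigma\tau\sigma^{-1})^{m}\sigma(\beta)=\sigma(\tau^{m}(\beta))$; it carries $\tau$-invariant admissible sets bijectively to $\sigma\tau\sigma^{-1}$-invariant admissible sets, so both ``reduction class'' and ``essential'' transport naturally under $\sigma$. For (2), the inclusion $\mathscr{A}_{\tau}\subseteq\mathscr{A}_{\tau^{m}}$ is immediate, since any $\tau$-invariant admissible set is automatically $\tau^{m}$-invariant and $(\tau^{m})^{j}(\beta)=\tau^{mj}(\beta)\neq\beta$ whenever $j\neq 0$. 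The reverse inclusion uses (1) with $\sigma=\tau$ to get $\tau(\mathscr{A}_{\tau^{m}})=\mathscr{A}_{\tau^{m}}$, so every $\alpha\in\mathscr{A}_{\tau^{m}}$ is a reduction class for $\tau$ (admissibility of $\mathscr{A}_{\tau^{m}}$ coming from the Proposition above applied to $\tau^{m}$); essentiality for $\tau$ then follows by contrapositive, since $\tau^{j}(\beta)=\beta$ would give $(\tau^{m})^{j}(\beta)=\beta$ with $mj\neq 0$, contradicting essentiality of $\alpha$ for $\tau^{m}$.

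For (3) and (4), the Proposition above makes any two essential reduction classes disjoint, so $\mathscr{A}_{\tau}$ is admissible, and (1) gives $\tau(\mathscr{A}_{\tau})=\mathscr{A}_{\tau}$; thus $\mathscr{A}_{\tau}$ is a reduction system. Theorem~\ref{thurston's_theorem} furnishes an adequate reduction system $\mathscr{A}_{0}$, and Lemma~\ref{the_minimal_adequate_reduction_system} shows that any minimal subsystem $\mathscr{A}_{\min}\subseteq\mathscr{A}_{0}$ consists entirely of essential classes, so $\mathscr{A}_{\min}\subseteq\mathscr{A}_{\tau}$. For the reverse inclusion, pick $\alpha\in\mathscr{A}_{\tau}$; by the Proposition, $i(\alpha,\beta)=0$ for every $\beta\in\mathscr{A}_{\min}$, so $\alpha$ descends to a class $\widehat{\alpha}$ lying in some component $M$ of $N_{\mathscr{A}_{\min}}$. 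After replacing $\tau$ by a power preserving both $\alpha$ and $M$, adequacy of $\mathscr{A}_{\min}$ makes $\widehat{\tau}|_{M}$ either pseudo-Anosov, which cannot preserve any essential class on $M$, forcing $\widehat{\alpha}$ to be peripheral (hence $\alpha\in\mathscr{A}_{\min}$); or of finite order, in which case a further power acts trivially on $M$ and fixes any $\beta\subset M$ with $i(\alpha,\beta)\neq 0$, contradicting essentiality of $\alpha$. Thus $\mathscr{A}_{\tau}=\mathscr{A}_{\min}$, which is (3). For (4), any adequate reduction system $\mathscr{A}$ contains a minimal subsystem, and Theorem~\ref{second_thm} identifies that subsystem with $\mathscr{A}_{\tau}$, whence $\mathscr{A}_{\tau}\subseteq\mathscr{A}$.

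The hardest step is the reverse inclusion in (3). When $M$ is one of the exceptional pieces $S_{0,3}$, $N_{1,2}$, $N_{2,1}$ singled out in Lemma~\ref{existance_of_curves_for_fixed_arc_by_homeo}, the Nielsen-Thurston dichotomy degenerates and one has to rule out $\widehat{\alpha}$ being an essential non-peripheral class on such $M$ directly. Lifting a power of $\widehat{\tau}|_{M}$ that fixes a class on $M$ back to a power of $\tau$ that fixes the corresponding class on $N$ also requires Lemma~\ref{kernel_of_reduction_homomorphism} to control the kernel of the reduction homomorphism and match fixed classes downstairs with fixed classes upstairs.
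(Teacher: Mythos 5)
The paper does not prove this lemma at all: it is quoted verbatim from Birman--Lubotzky--McCarthy \cite[Lemma 2.6]{Birman-Lubotzky-McCarthy83} and used as a black box, so there is no in-paper argument to compare against. Your reconstruction follows the BLM strategy --- (1) and (2) from naturality of intersection number and conjugation, (3) and (4) by identifying $\mathscr{A}_{\tau}$ with a minimal adequate reduction system via Proposition 2.6 (disjointness), Theorem~\ref{thurston's_theorem}, and Lemma~\ref{the_minimal_adequate_reduction_system} --- and the core of it is sound, including the delicate reverse inclusion where the pseudo-Anosov/finite-order dichotomy on components of $N_{\mathscr{A}_{\min}}$ is played against essentiality of $\alpha$.

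Two points need tightening. First, in (4) you invoke Theorem~\ref{second_thm}, but that theorem is proved in the paper \emph{using} this lemma (its proof asserts that $\mathscr{A}_{\tau}$ is the intersection of all adequate reduction systems, which is exactly (3) plus (4)), so the citation is circular. It is also unnecessary: your argument for (3) shows $\mathscr{A}_{\tau}=\mathscr{A}_{\min}$ for a minimal adequate subsystem of an \emph{arbitrary} adequate reduction system, so applying it to a minimal subsystem of the given $\mathscr{A}$ yields (4) directly. Second, the step ``any minimal subsystem $\mathscr{A}_{\min}\subseteq\mathscr{A}_{0}$ consists entirely of essential classes'' does not follow from Lemma~\ref{the_minimal_adequate_reduction_system} alone: that lemma says $\alpha$ is inessential iff $\mathscr{A}_{\min}-\{\alpha\}$ is adequate for \emph{some power} $\tau^{m}$, whereas minimality of $\mathscr{A}_{\min}$ is taken with respect to $\tau$ itself (e.g.\ when $\tau$ permutes the classes, $\mathscr{A}_{\min}-\{\alpha\}$ may fail to be $\tau$-invariant yet be $\tau^{m}$-invariant). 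One should either take $\mathscr{A}_{\min}$ minimal among systems adequate for some power of $\tau$, or iterate the removal process while passing to powers, using your part (2) to guarantee that the target $\mathscr{A}_{\tau^{m}}=\mathscr{A}_{\tau}$ is unchanged and that a class essential for $\tau^{m}$ is essential for $\tau$. With these repairs the proof is complete and matches the argument the paper implicitly relies on.
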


At the end of this section, we prove Theorem~\ref{second_thm}.
The proof is the same as the proof by Birman-Lubotzky-McCarthy for orientable surfaces.

\begin{proof}[Proof of Theorem~\ref{second_thm}]
Let $\tau\in\mathscr{M}(N)$.
Then, by Theorem~\ref{thurston's_theorem} either $\tau$ is adequately reduced (that is the case $\mathscr{A}=\emptyset$) or $\tau$ is reducible, and if $\tau$ is reducible, then there exists an adequate reduction system.
By Lemma~\ref{the_minimal_adequate_reduction_system}, $\mathscr{A}_{\tau}$ is the intersection of all adequate reduction systems for $\tau$.
Hence $\mathscr{A}_{\tau}$ is canonical and unique.
The desired curve system $A$ is any representative of $\mathscr{A}_{\tau}$.
\end{proof}

\begin{figure}[h]
\includegraphics[scale=0.40]{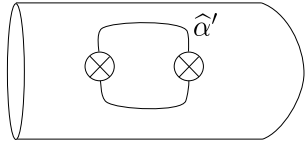}
\caption{The only two-sided curve $\widehat{\alpha}'$ on $N_{2,1}$.}\label{fig_hat_alpha}
\end{figure}

%%%%%%%%%%%%%%%%%
%%%%Abelian subgroups%%%%
%%%%%%%%%%%%%%%%%
\section{Abelian subgroups of mapping class groups}\label{Abelian subgroups of mapping class groups}

In this section we prove Theorem~\ref{first_thm}.
Let $N$ be a compact connected non-orientable surface, and $\mathscr{A}\in\mathscr{S}(N)$ an admissible subset.
Then $G<\mathscr{M}(N_{\mathscr{A}})$ is {\it adequately reduced} if each $\tau\in G$ is adequately reduced.
Let $G$ be an abelian subgroup of $\mathscr{M}(N_{\mathscr{A}})$.
We denote by ${\rm rank}(G)$ the torsion-free rank of $G$.

\begin{lemma}{\rm(}\cite[Lemma 3.1]{Birman-Lubotzky-McCarthy83}{\rm)}\label{A_G_is_an_adequate_reduction_system}
\begin{itemize}
\item[(1)] Let $\mathscr{A}_{G}$ be the union of the essential reduction systems $\mathscr{A}_{\tau}$ for any $\tau\in G$.
Then $\mathscr{A}_{G}$ is an adequate reduction system for each $\tau\in G$.
\item[(2)] If $G$ is adequately reduced, then ${\rm rank}(G)\leq C_{0}(N_{\mathscr{A}})$, where $C_{0}(N_{\mathscr{A}})$ is the number of components of $N_{\mathscr{A}}$ not homeomorphic to a pair of pants.
\end{itemize}
\end{lemma}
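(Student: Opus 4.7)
The plan is to exploit two ingredients: conjugation-equivariance of essential reduction systems combined with the commutativity of $G$, and the vanishing of geometric intersection between essential classes and any reduction class. For part (2), the route is to pass to a finite-index subgroup, decompose component by component, and invoke the fact that pseudo-Anosov elements on any of the component surfaces have virtually cyclic centralizer.

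For part (1), I first check that $\mathscr{A}_G$ is admissible. Given $\alpha\in\mathscr{A}_{\sigma_1}$ and $\alpha'\in\mathscr{A}_{\sigma_2}$ with $\sigma_1,\sigma_2\in G$, commutativity gives $\sigma_1(\mathscr{A}_{\sigma_2})=\mathscr{A}_{\sigma_1\sigma_2\sigma_1^{-1}}=\mathscr{A}_{\sigma_2}$, so $\mathscr{A}_{\sigma_2}$ is a reduction system for $\sigma_1$; the proposition preceding Lemma~\ref{existance_of_curves_for_fixed_arc_by_homeo}, applied to the essential class $\alpha$ and the reduction class $\alpha'$ of $\sigma_1$, yields $i(\alpha,\alpha')=0$. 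The same commutativity shows that every $\tau\in G$ stabilises $\mathscr{A}_G$ setwise. For adequacy with respect to $\tau$, cut first along $\mathscr{A}_\tau\subset\mathscr{A}_G$; on each component $N_i$ of $N_{\mathscr{A}_\tau}$ a large power of $\tau$ restricts to a finite-order or pseudo-Anosov mapping class. Because $\mathscr{A}_G$ is admissible, every additional curve of $\mathscr{A}_G$ lies inside some $N_i$ and is essential there; moreover it is preserved up to isotopy by a power of $\tau$, which is impossible on a pseudo-Anosov component, so such components are not subdivided further. On a finite-order component a still higher power of $\tau$ restricts to the identity, and the identity remains the identity on the finer pieces cut out by the remaining curves of $\mathscr{A}_G$. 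Hence $\mathscr{A}_G$ is an adequate reduction system for $\tau$.

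For part (2), assume $G$ is adequately reduced on $N_\mathscr{A}$, so each $\tau\in G$ has every restriction either of finite order or pseudo-Anosov on each $N_i$. The representation $\varphi\colon\mathscr{M}(N_\mathscr{A})\to\operatorname{Aut}(\partial N_\mathscr{A})$ has finite image and kernel $\bigoplus_{i}\mathscr{M}(N_i)$, so after replacing $G$ by a finite-index subgroup $G_0$ (of the same torsion-free rank) the inclusion factors through $\bigoplus_i\mathscr{M}(N_i)$. Then $\operatorname{rank}(G_0)\leq\sum_i\operatorname{rank}(\pi_i(G_0))$, where $\pi_i$ denotes projection. If $N_i$ is a pair of pants, $\mathscr{M}(N_i)$ is finite and $\operatorname{rank}(\pi_i(G_0))=0$. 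If $N_i$ is not a pair of pants, the image $\pi_i(G_0)$ is an abelian group whose non-torsion elements are pseudo-Anosov, and I claim $\operatorname{rank}(\pi_i(G_0))\leq 1$. Summing over the $C_0(N_\mathscr{A})$ non-pants components then gives the desired bound $\operatorname{rank}(G)\leq C_0(N_\mathscr{A})$.

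The main obstacle is the last claim, the rank-one bound for a pseudo-Anosov abelian subgroup on a non-orientable component. On an orientable piece this is McCarthy's theorem that the centralizer of a pseudo-Anosov class in the mapping class group is virtually infinite cyclic. For a non-orientable component $N_i$, I would lift through the orientable double cover and the injection $\iota$ recalled in the introduction, and use Wu's compatibility between the Nielsen-Thurston classifications on $N_i$ and its orientation cover, so that a pseudo-Anosov element on $N_i$ corresponds to a pseudo-Anosov on the double cover whose centralizer in the orientable mapping class group is virtually cyclic. Combined with the observation that the torsion-free rank of an abelian group coincides with that of its quotient by the torsion subgroup, this yields the required rank bound on $\pi_i(G_0)$ and completes the proof.
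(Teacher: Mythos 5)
Your argument is correct, and for part (1) it is exactly the component-by-component adequacy argument of Birman--Lubotzky--McCarthy that the paper omits with a citation, so there is nothing to compare there. For part (2) the two proofs share the essential idea --- pass to the orientable double cover of each non-orientable component and use Wu's compatibility of Nielsen--Thurston types under the covering --- but they deploy it at different levels. The paper lifts the whole group at once: it assembles the covers $p_{i}\colon S_{i}\to N_{i}$ into a single covering of $N_{\mathscr{A}}$, pushes $G$ forward to an adequately reduced abelian subgroup $\widetilde{G}<\mathscr{M}(\amalg_{i}S_{i})$, and invokes \cite[Lemma 3.1]{Birman-Lubotzky-McCarthy83} as a black box; the price is the extra observation that $C_{0}(\amalg_{i}S_{i})=C_{0}(N_{\mathscr{A}})$, which the paper settles by noting that the double cover of a non-orientable component has even Euler characteristic and so is never a pair of pants. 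You instead re-derive the component decomposition directly on $N_{\mathscr{A}}$ (finite-index subgroup killing the boundary/component permutation, then $\operatorname{rank}\leq\sum_{i}\operatorname{rank}(\pi_{i}(G_{0}))$) and only use the double cover of a single component to import the virtually cyclic centralizer of a pseudo-Anosov class; this avoids the $C_{0}$-preservation step entirely, at the cost of reproving the reduction to a rank-one bound per non-pants component that the paper simply quotes. Both routes are sound; the paper's is shorter given that \cite[Lemma 3.1]{Birman-Lubotzky-McCarthy83} is cited wholesale, while yours makes the per-component mechanism explicit and isolates the only genuinely non-orientable input as the pseudo-Anosov centralizer statement.
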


\begin{proof}
We can show (1) by a similar argument to Birman-Lubotzky-McCarthy.
Therefore we only give the proof of (2).
We set $N_{\mathscr{A}}=\amalg_{i=1}^{k}N_{i}$, where each $N_{i}$ is a connected component of $N_{\mathscr{A}}$.
If $N_{i}$ is a non-orientable surface, then let $p_{i}\colon S_{i}\rightarrow N_{i}$ be the double covering map of $N_{i}$, where $S_{i}$ is the double covering orientable surface of $N_{i}$.
If $N_{i}$ is an orientable surface, then let $p_{i}\colon S_{i}\rightarrow N_{i}$ be the identity map on $N_{i}$.
We define $p=\amalg_{i=1}^{k}p_{i}\colon\amalg_{i=1}^{k}S_{k}\rightarrow\amalg_{i=1}^{k}N_{i}$ by $p(x)=p_{i}(x)$ for $x\in S_{i}$, then $p$ is a covering map of $N_{\mathscr{A}}$.
Let $\iota\colon\mathscr{M}(N_{\mathscr{A}})\rightarrow\mathscr{M}(\amalg_{i=1}^{k}S_{i})$ be an injective homomorphism induced by $p$.
We choose an adequately reduced abelian subgroup $G<\mathscr{M}(N_{\mathscr{A}})=\mathscr{M}(\amalg_{i=1}^{k}N_{i})$.
We set $\widetilde{G}=\iota(G)<\mathscr{M}(\amalg_{i=1}^{k}S_{i})$.
Then, $\widetilde{G}$ is an adequately reduced abelian subgroup, since $\tau\in\mathscr{M}(N_{i})$ is finite order (resp. pseudo-Anosov) mapping class if and only if $\tilde{\tau}\in\mathscr{M}(S_{i})$ is finite order (resp. pseudo-Anosov) mapping class.
By \cite[Lemma 3.1]{Birman-Lubotzky-McCarthy83}, we know ${\rm rank}(\widetilde{G})\leq C_{0}(\amalg_{i=1}^{k}S_{i})$, where $ C_{0}(\amalg_{i=1}^{k}S_{i})$ is the number of components of $\amalg_{i=1}^{k}S_{i}$ not homeomorphic to a pair of pants.
If $N_{i}$ is a non-orientable surface, then $S_{i}$ can not be a pair of pants, because the Euler characteristic of $S_{i}$ should be even but that of a pair of pants is $-1$.
If $N_{i}$ is an orientable surface, then $N_{i}$ is a pair of pants if and only if $S_{i}$ is a pair of pants.
Hence $C_{0}(\amalg_{i=1}^{k}S_{i})= C_{0}(N_{\mathscr{A}})$.
By $\widetilde{G}\cong G$, it follows that ${\rm rank}(G)\leq C_{0}(N_{\mathscr{A}})$.
\end{proof}

\begin{comment}
We use the following proposition in the proof of Theorem~\ref{first_thm}.

\begin{proposition}{\rm(}\cite[Proposition 2.3]{Atalan-Korkmaz14}{\rm)}\label{complexity_of_non-orientable_surface}
Let $N$ be a connected non-orientable surface of genus $g\geq 2$ with n holes.
Suppose that $(g, n)\not=(2, 0)$.
Let $a_{r}=3r+n-2$ and $b_{r}=4r+n-2$ if $g=2r+1$ and $a_{r}=3r+n-4$ and $b_{r}=4r+n-4$ if $g=2r$.
Then there is a maximal simplex of dimension $q$ in the curve complex $\mathcal{C}(N)$ if and only if $a_{r}\leq q\leq b_{r}$.
\end{proposition}
\end{comment}

Finally, we give the proof of Theorem~\ref{first_thm}.

\begin{proof}[Proof of Theorem~\ref{first_thm}]
Let $G$ be a torsion-free abelian subgroup of $\mathscr{M}(N)$.
By (1) of Lemma~\ref{A_G_is_an_adequate_reduction_system}, we see $G\in\mathscr{M}_{\mathscr{A}_{G}}(N)$.
We denote by $\Lambda\colon\mathscr{M}_{\mathscr{A}_{G}}(N)\rightarrow\mathscr{M}(N_{\mathscr{A}_{G}})$ the reduction homomorphism along $\mathscr{A}_{G}$.
We set $H=\Lambda(G)$.
The sequence
\[\xymatrix{
1 \ar[r] & G\cap{\rm Ker}(\Lambda) \ar[r] & G \ar[r] & H \ar[r] & 1
}\]
is exact.
From this exact sequence, we have ${\rm rank}(G)={\rm rank}(G\cap{\rm Ker}(\Lambda))+{\rm rank}(H)$.
By Lemma~\ref{kernel_of_reduction_homomorphism}, ${\rm rank}(G\cap{\rm Ker}(\Lambda))\leq{\rm rank}({\rm Ker}(\Lambda))\leq{\rm card}(\mathscr{A}^{\rm two})$.
Therefore, we see $G\cap{\rm Ker}(\Lambda)$ is isomorphic to the group generated by Dehn twists along pairwise disjoint curves whose cardinality is at most ${\rm card}(\mathscr{A}^{\rm two})$.

From (2) of Lemma~\ref{A_G_is_an_adequate_reduction_system}, we see ${\rm rank}(H)\leq C_{0}(N_{\mathscr{A}_{G}})$.
By the arguments in the proof of \cite[Lemma 3.1 (2)]{Birman-Lubotzky-McCarthy83}, we see $H$ is isomorphic to the group generated by pseudo-Anosov mapping classes on connected subsurfaces, and the number is bounded by $C_{0}(N_{\mathscr{A}_{G}})$.
However, we remark that any subsurface which supports a pseudo-Anosov mapping class must also support a non-trivial Dehn twist.
Hence we can replace the pseudo-Anosov mapping class generators by Dehn twists. 
As shown in Figure~\ref{fig_two_sided_scc} (see also \cite[Proposition 2.3]{Atalan-Korkmaz14}), ${\rm card}(\mathscr{A}^{\rm two})$ is bounded by $\frac{3}{2}(g-1)+n-2$ if $g$ is odd and $\frac{3}{2}g+n-3$ if $g$ is even, so we are done.
\begin{comment}
For each component of $N_{\mathscr{A}_{G}}$ not homeomorphic to a pair of pants, we choose a class $\beta\in\mathscr{S}(N_{\mathscr{A}(N)})$ contained in that component.
This forms a collection $\mathscr{B}$ which is admissible and whose cardinality is exactly the cardinality of $C_{0}(N_{\mathscr{A}_{G}})$.
The system $\mathscr{B}$ lifts to an admissible set $\mathscr{A}\in\mathscr{S}(N_{\mathscr{A}_{G}})$, so that $\mathscr{A}_{G}\cup\mathscr{A}$ is an admissible subset of $\mathscr{S}(N)$.
Then ${\rm card}(\mathscr{A}_{G}\cup\mathscr{A})={\rm card}(\mathscr{A}_{G})+C_{0}(N_{\mathscr{A}_{G}})$.
Since an admissible subset of $\mathscr{S}(N)$ is bounded above by $2g+n-3$, we obtained the above bound of ${\rm rank}(G)$.
Therefore, the above bound of ${\rm rank}(G)$ is $\frac{3}{2}(g-1)+n-2$ if $g$ is odd and $\frac{3}{2}g+n-3$ if $g$ is even.
\end{comment}
\end{proof}

%%%%%%%%%%%%%%%%%%%
%%%Acknowledgements%%%%%%
%%%%%%%%%%%%%%%%%%%
\par
{\bf Acknowledgements:} The author is extremely grateful to Hisaaki Endo for his warm encouragement and helpful advice.
She also wishes to thank B\l a\.{z}ej Szepietowski.
He told her Nielsen-Thurston classification for the mapping class group of non-orientable surface and the proof by Wu, the fact that the mapping class group of non-orientable surface is embedded in that of the double covering orientable surface, and the result by him that any Dehn twists are not contained in the image of the embedding.
Further He read her arguments in this paper and gave her a lot of valuable comments, and she can improve Theorem~\ref{first_thm}. 
She also thanks Genki Omori for the useful discussion with her particularly about Lemma~\ref{kernel_of_reduction_homomorphism}.
This work was supported by JSPS KAKENHI, the grant number 16J00397 of Research Fellowship for Young Scientists.

%%%%%%%%%%%%%%%%%%%
%%%Reference%%%%%%%%%%%
%%%%%%%%%%%%%%%%%%%


\begin{thebibliography}{99}
\bibitem{Atalan15}
F. Atalan, {\it An algebraic characterization of a Dehn twist for nonorientable surfaces}, available at arXiv:1501.07183v2 [math.GT].

\bibitem{Atalan-Korkmaz14}
F. Atalan and M. Korkmaz, {\it Automorphisms of curve complexes on nonorientable surfaces}, Groups Geom. Dyn. {\bf 8} (2014), no. 1, 39--68.

\bibitem{Atalan-Szepietowski14}
F. Atalan and B. Szepietowski, {\it Automorphisms of the mapping class group of a nonorientable surface}, available at arXiv:1403.2774v2 [math.GT].

\bibitem{Birman-Lubotzky-McCarthy83}
J. S. Birman, A. Lubotzky, and J. McCarthy, {\it Abelian and solvable subgroups of the mapping class groups}, Duke Math.\ J. {\bf 50} (1983), no. 4, 1107--1120.

\bibitem{Kim-Koberda16}
S. Kim and T. Koberda, {\it Right-angled Artin groups and finite subgraphs of curve graphs}, Osaka\ J.\ Math. {\bf 53}, no. 3, to appear.

\bibitem{Szepietowski10}
B. Szepietowski, {\it Embedding the braid group in mapping class groups}, Publ. Mat. {\bf 54} (2010), no. 2, 359--368.

\bibitem{Stukow09}
M. Stukow, {\it Commensurability of geometric subgroups of mapping class groups}, Geom. Dedicata {\bf 143} (2009), 117--142.

\bibitem{Thurston88}
W. Thurston, {\it On the geometry and dynamics of diffeomorphisms of surfaces}, Bull. Amer. Math. Soc. (N.S.) {\bf 19} (1988), no. 2, 417--431.

\bibitem{Wu87}
Y. Wu, {\it Canonical reducing curves of surface homeomorphism}, Acta Math. Sinica (N.S.) {\bf 3} (1987), no. 4, 305--313.

\end{thebibliography}
\end{document}